\theoremstyle{plain}
\newtheorem{thm}{Theorem}[section]
\newtheorem{cor}[thm]{Corollary}
\newtheorem{lemma}[thm]{Lemma}
\theoremstyle{definition}
\newtheorem{definition}[thm]{Definition}
\theoremstyle{remark}
\newtheorem{remark}[thm]{Remark}
\title{
\vspace{-2em}
\textbf{Minimal Matchings for dP3 Cluster Variables}}
\author{Judy Hsin-Hui Chiang, Gregg Musiker, Son Nguyen}
\date{\vspace{-1em}}
\begin{document}

\maketitle

\begin{abstract}
    In previous work \cite{LM}, Tri Lai and the second author studied a family of subgraphs of the dP3 brane tiling, called Aztec castles, whose dimer partition functions provide combinatorial formulas for cluster variables resulting from mutations of the quiver associated with the del Pezzo surface dP3. In our paper, we investigate a variant of the dP3 quiver by considering a second alphabet of variables that breaks the symmetries of the relevant recurrences. This deformation is motivated by the theory of cluster algebras with principal coefficients introduced by Fomin and Zelevinsky. Our main result gives an explicit formula extending previously known generating functions for dP3 cluster variables by using Aztec castles and constructing their associated minimal matchings.    
\end{abstract}

\textbf{Keywords:} Cluster algebras, minimal matchings, del Pezzo 3 lattice, $F$-polynomials

\tableofcontents

\section{Introduction}\label{sec:intro}

    \justifying
    Upon Fomin and Zelevinsky's pioneering work \cite{FZ1,FZ4} in cluster algebras for the study of total positivity and dual canonical bases in semisimple Lie groups, a great variety of its applications have been found in combinatorics, tropical geometry \cite{SW}, Teichm\"uller theory \cite{FG}, and representation theory \cite{KB}. With the introduction of the Laurent phenomenon, mathematicians (\cite{S,LM,BMPW}) have been intrigued to study combinatorial interpretations for the cluster variables as perfect matchings of graphs, under suitable weighting schemes. Of particular interest is the situation where the graphs are directly related to the quiver of the cluster algebra, namely when they are subgraphs of the dual of the quiver.

    The main concern of our paper is the \textit{del Pezzo 3 (dP3) quiver} shown in Figure \ref{fig:dP3quiver}. Denote $\mu_i$ a mutation at vertex $i$. In this paper, we will consider the actions defined in Definition \ref{def:tau}:
    \begin{itemize}
        \centering
        \item[] $\tau_{1}=\mu_{1} \circ \mu_{2} \circ (12)$,
        \item[] $\tau_{2}=\mu_{3} \circ \mu_{4} \circ (34)$,
        \item[] $\tau_{3}=\mu_{5} \circ \mu_{6} \circ (56)$,
        \item[] $\tau_{4}= \mu_{1} \circ \mu_{4} \circ \mu_1 \circ \mu_5 \circ \mu_1 \circ (145)$,
        \item[] $\tau_{5}= \mu_{2} \circ \mu_{3} \circ \mu_2 \circ \mu_6 \circ \mu_2 \circ (236)$,
    \end{itemize}
    In \cite{Z}, Zhang proved an explicit formula for the cluster variables of the dP3 quiver under a sequence of mutations $\tau_1\tau_2\tau_3\tau_1\tau_2\tau_3\ldots$ as perfect matchings of Aztec Dragons. The authors of \cite{LMNT} then found a formula for more general sequences of mutations $\tau_{a_1}\tau_{a_2}\tau_{a_3}\ldots$ for $a_i\in\{1,2,3\}$. Finally, Lai and Musiker \cite{LM} found a formula for any sequence of mutations $\tau_{a_1}\tau_{a_2}\tau_{a_3}\ldots$ for $a_i\in\{1,2,3,4,5\}$ as perfect matchings of Aztec Castles with Aztec Dragons and the graphs of \cite{LMNT} being special cases.  Furthermore, the mutation sequences considered in \cite{LM} were general enough to obtain formulas for any cluster variable of the dP3 quiver reachable via a toric mutation sequence, see Section \ref{subsec:toricprism} for more details.
    
    \begin{figure}[h!]
        \centering
        \begin{minipage}{.5\textwidth}
            \centering
            \includegraphics[scale = 0.5]{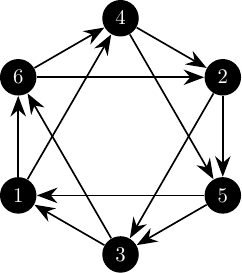}
            \caption{dP3 quiver}
            \label{fig:dP3quiver}
        \end{minipage}%
        \begin{minipage}{.5\textwidth}
            \centering
            \includegraphics[scale = 0.5]{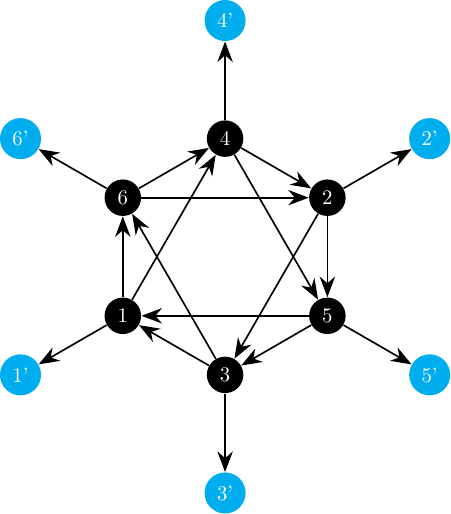}
            \caption{Framed dP3 quiver}
            \label{fig:dP3Framed}
        \end{minipage}
    \end{figure}

    In this paper, we will study a variant of the dP3 quiver where we introduce a second set of cluster variables by framing the quiver using new \textit{frozen vertices} as shown in Figure \ref{fig:dP3Framed}. This deformation is motivated by the theory of cluster algebras with principal coefficients introduced by Fomin and Zelvinsky in \cite{FZ4}. A natural question to ask is to find an explicit formula for this new quiver in terms of Aztec Castles. It turns out that it suffices to find a correct \textit{minimal matching} of the Aztec Castles. This is the main concern of this paper, and the construction of the minimal matching will be stated in Section \ref{subsec:construction}.

    \textbf{Contents.} The paper is outlined as follows. In Section \ref{sec:background}, we review the backgrounds of dP3 quivers, the construction of Aztec castles, and some preliminary results as well as proof techniques from previous work. Our main theorem is the construction of its minimal matching in Section \ref{sec:minCastle}, where we also give some examples. We leave our proof of the theorem to Section \ref{sec:proof}.

\section{Background}
\label{sec:background}

\subsection{Quiver and cluster mutations}\label{subsec:quivclus}

    \justifying
    A \textbf{quiver} $Q$ is a directed finite graph with a set of vertices $V$ and a set of directed edges $E$ connecting them such that there are no loops or $2$-cycles.
    We can relate a \textbf{cluster algebra} with \textbf{initial seed} $\{x_{1},x_{2},\ldots,x_{n}\}$ to $Q$ by associating a cluster variable $x_{i}$ to every vertex labeled $i$ in $Q$ where $|V| = n$.
    The \textbf{cluster} is the union of the cluster variables at each vertex.

    \begin{definition}\label{def:quivmut}[Quiver Mutation \cite{FZ1}]
        Mutating at a vertex $i$ in $Q$ is denoted by $\mu_{i}$ and corresponds to the following actions on the quiver:
        \begin{itemize}
        \item For every 2-path through $i$ (e.g. $j \rightarrow i \rightarrow k$), add an edge from $j$ to $k$.
        \item Reverse the directions of the arrows incident to $i$
        \item Delete any 2-cycles created from the previous two steps.
        \end{itemize}
    \end{definition}

    When we mutate at a vertex $i$, the cluster variable at this vertex is updated and all other cluster variables remain unchanged.
    The action of $\mu_{i}$ on the cluster leads to the following binomial exchange relation:
    \begin{equation*}\label{eq:exchange relation}
        x'_{i}x_{i} = \prod_{i \rightarrow j \; \mathrm{in} \; Q}x_{j}^{a_{i \rightarrow j}} + \prod_{j \rightarrow i \; \mathrm{in} \; Q}x_{j}^{b_{j \rightarrow i}}
    \end{equation*}
    where $x_i'$ is the new cluster variable at vertex $i$, $a_{i \rightarrow j}$ denotes the number of edges from $i$ to $j$, and $b_{j \rightarrow i}$ denotes the number of edges from $j$ to $i$.

    It was proved in \cite{FZ1} that every cluster variable is a \textit{Laurent polynomial} in $\mathbb{Z}[x_1^{\pm 1},\ldots, x_n^{\pm 1}]$, i.e.
    \[ x_m = \dfrac{P(x_1,\ldots,x_n)}{x_1^{d_1}\ldots x_n^{d_n}} \]
    for all $m$.

\subsection{del Pezzo 3 quiver and lattice}\label{subsec:dP3}

    The focus of this paper is the \textbf{del Pezzo 3 (dP3) quiver} illustrated in Figure~\ref{fig:dp3Quiver}. By unfolding this quiver, we get the infinite unfolded dP3 quiver as shown in Figure \ref{fig:unfoldeddP3}. Then, taking the dual graph of the unfolded quiver yields its brane tiling in Figure~\ref{fig:dP3Lattice}, which will be referred to as the \textbf{dP3 lattice}. This dP3 lattice is an example of a brane tiling, which are doubly periodic, bipartite, planar graphs that arise in string theory \cite{FHKV}. Using the dimer model on such graphs, theoretical physicists can associate an infinite class of supersymmetric quiver gauge theories to a corresponding toric variety (which is a Calabi–Yau 3-fold) as well as this combinatorial model. They appear physically in string theory through the intersections of NS5 and D5-branes which are dual to a configuration of D3-branes probing the singularity of a toric Calabi–Yau threefold. Because of its geometry connection and how the (3 + 1) dimensional supersymmetric gauge field theory lives on the world volume of the D3-brane, it can be represented by the \textbf{dP3 quiver}. In this special case of the dP3 lattice, these brane tilings were used by Cottrell-Young \cite{CY} as a version of the domino shuffling algorithm.
    
    \begin{figure}[h!]
        \centering
        \begin{minipage}[b]{.3\textwidth}
            \centering
            \includegraphics[scale = 0.5]{fig/dP3Quiver.pdf}
            \caption{dP3 quiver}
            \label{fig:dp3Quiver}
        \end{minipage}%
        \begin{minipage}[b]{.7\textwidth}
            \centering
            \includegraphics[scale = 0.5]{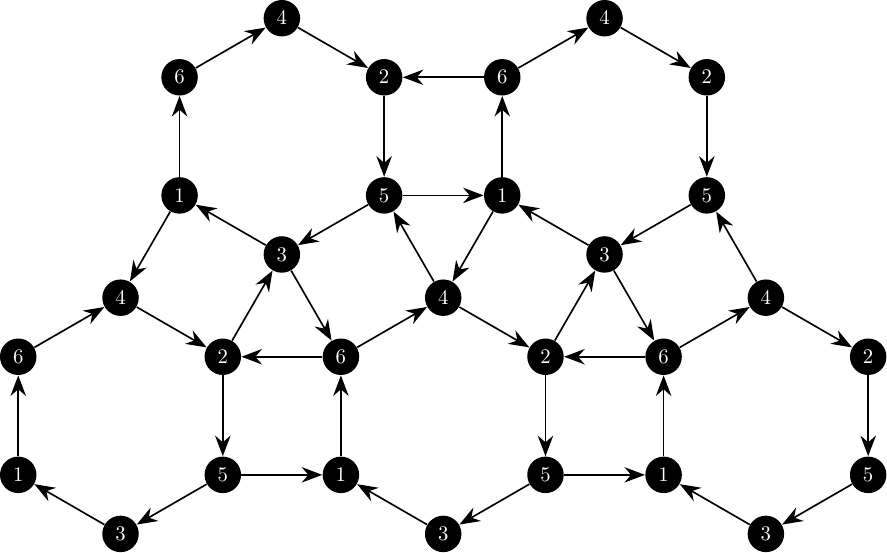}
            \caption{Unfolded dP3 quiver}
            \label{fig:unfoldeddP3}
        \end{minipage}
    \end{figure}
    
    \begin{figure}[h!]
        \centering
        \includegraphics[width=6cm]{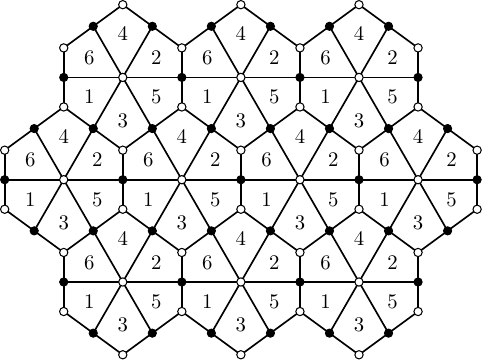}
        \caption{dP3 lattice}
        \label{fig:dP3Lattice}
    \end{figure}

    In \cite{LM}, Lai and Musiker studied sequences of \textit{toric mutations} on the dP3 quiver (see Section \ref{subsec:toricprism}). They found an expression for resulting cluster variables in terms of perfect matchings of \textit{Aztec Castles} (see Section \ref{subsec:castle}).

\subsection{Toric mutations and prism walk}\label{subsec:toricprism}

    \subsubsection{Toric mutations}\label{subsubsec:toricMutation}

    A vertex is \textbf{toric} if its in-degree and out-degree are both $2$. A \textbf{toric mutation} is a mutation at a toric vertex. In this paper, we will study the following five actions on the dP3 quiver, which are also the main actions studied in \cite{LM}.

    \begin{definition}\label{def:tau}
        Define the following actions 
        \begin{itemize}
        \centering
        \item[] $\tau_{1}=\mu_{1} \circ \mu_{2} \circ (12)$,
        \item[] $\tau_{2}=\mu_{3} \circ \mu_{4} \circ (34)$,
        \item[] $\tau_{3}=\mu_{5} \circ \mu_{6} \circ (56)$,
        \item[] $\tau_{4}= \mu_{1} \circ \mu_{4} \circ \mu_1 \circ \mu_5 \circ \mu_1 \circ (145)$,
        \item[] $\tau_{5}= \mu_{2} \circ \mu_{3} \circ \mu_2 \circ \mu_6 \circ \mu_2 \circ (236)$,
        \end{itemize}
        where we apply a graph automorphism of $Q$ and permutation to the labeled seed after the sequence of mutations.
    \end{definition}

    One can then check that on the level of quivers and labeled seeds (i.e. ordered clusters), we have the following identities, which are also noted in \cite{LM}:
    For all $i,j$ such that $1 \leq i \not = j \leq 3$, we have
    
    \begin{equation*}
    \label{eq:tau_relations}
    \begin{split}
     \tau_1(Q) = \tau_2(Q) = \tau_3(Q) &= \tau_4(Q) = \tau_5(Q) = Q  \\
    (\tau_{i})^{2} \{x_1,x_2\dots, x_6\} = (\tau_{4})^{2} \{x_1,x_2\dots, x_6\} &= (\tau_{5})^{2} \{x_1,x_2\dots, x_6\} = \{x_1,x_2\dots, x_6\} \\
    (\tau_{i}\tau_{j})^{3} \{x_1,x_2\dots, x_6\}&= \{x_1,x_2\dots, x_6\}, \\
    \tau_i \tau_4 \{x_1,x_2\dots, x_6\} &= \tau_4 \tau_i \{x_1,x_2\dots, x_6\}, \\
    \tau_i \tau_5 \{x_1,x_2\dots, x_6\} &= \tau_5 \tau_i \{x_1,x_2\dots, x_6\}.
    \end{split}
    \end{equation*}

    \subsubsection{Prism walk}\label{subsubsec:prismWalk}

    We will model the mutations defined in Definition \ref{def:tau} as prism walk on a square triangulated lattice of $\mathbb{Z}^3$, such that a two-dimensional cross-section is illustrated in Figure \ref{fig:lattice}. We will place the prism so that the coordinates of vertices $1,\ldots,6$ are $(0,-1,1),(0,-1,0),(-1,0,0),(-1,0,1),(0,0,1),(0,0,0)$ respectively. The reason is the cluster variables corresponding to these coordinates, described in Section \ref{subsec:castle}, are $x_1,\ldots,x_6$ respectively.

    \begin{figure}[h!]
        \centering
        \begin{subfigure}[b]{0.3\textwidth}
            \centering
            \includegraphics[width = 0.5\textwidth]{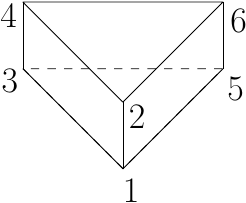}
            \caption{Prism}
            \label{fig:prism}
        \end{subfigure}
        \begin{subfigure}[b]{0.4\textwidth}
            \centering
            \includegraphics[width = \textwidth]{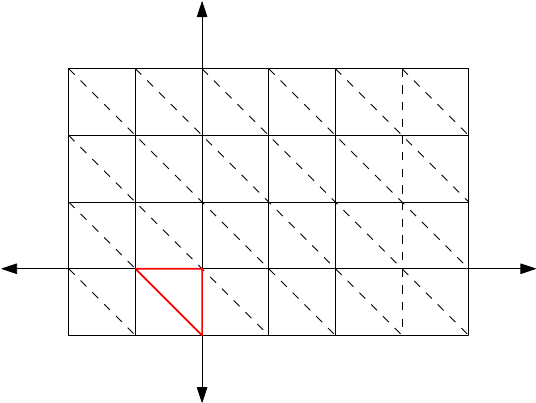}
            \caption{Lattice}
            \label{fig:lattice}
        \end{subfigure}
        \caption{Prism and lattice}
        \label{fig:prims-lattice}
    \end{figure}

    We define the action of $\mu_i$ on the prism as follows. First, $\mu_1$ (resp. $\mu_2$) reflects vertex $1$ (resp. $2$) of the prism about the center of the rectangle formed by vertices $3,4,5,6$. Similarly, $\mu_3$ (resp. $\mu_4$) reflects vertex $3$ (resp. $4$) of the prism about the center of the rectangle formed by vertices $1,2,5,6$, and $\mu_5$ (resp. $\mu_6$) reflects vertex $5$ (resp. $6$) of the prism about the center of the rectangle formed by vertices $1,2,3,4$.

    As a result, the action of $\tau_1$ on the prism can be visualized as in Figure \ref{fig:tau1OnPrism}. Thus, $\tau_1$ acts on the prism by reflecting the edge $12$ about the rectangle formed by vertices $3,4,5,6$. Similarly, $\tau_2$ acts on the prism by reflecting the edge $34$ about the rectangle formed by vertices $1,2,5,6$, and $\tau_3$ acts on the prism by reflecting the edge $56$ about the rectangle formed by vertices $1,2,3,4$.
    
    \begin{figure}[h!]
        \centering
        \begin{minipage}{.5\textwidth}
            \centering
            \includegraphics[width=6cm]{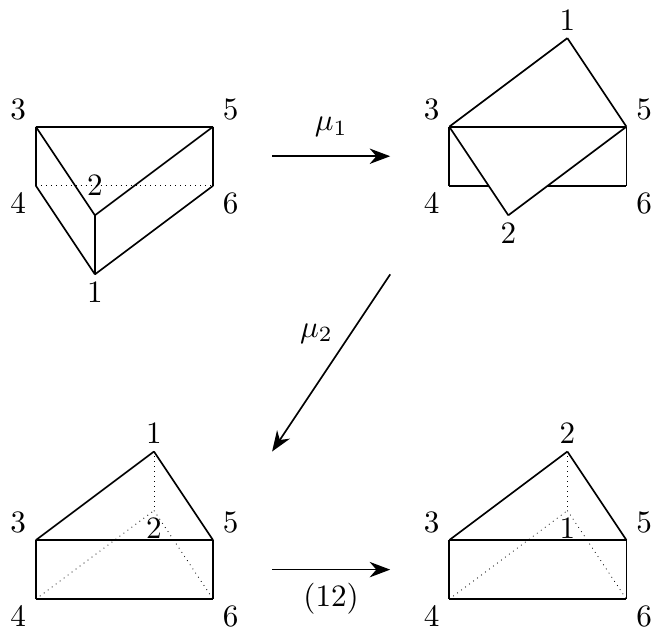}
            \caption{$\tau_1$ action on the prism}
            \label{fig:tau1OnPrism}
        \end{minipage}%
        \begin{minipage}{.5\textwidth}
            \centering
            \includegraphics[width=6cm]{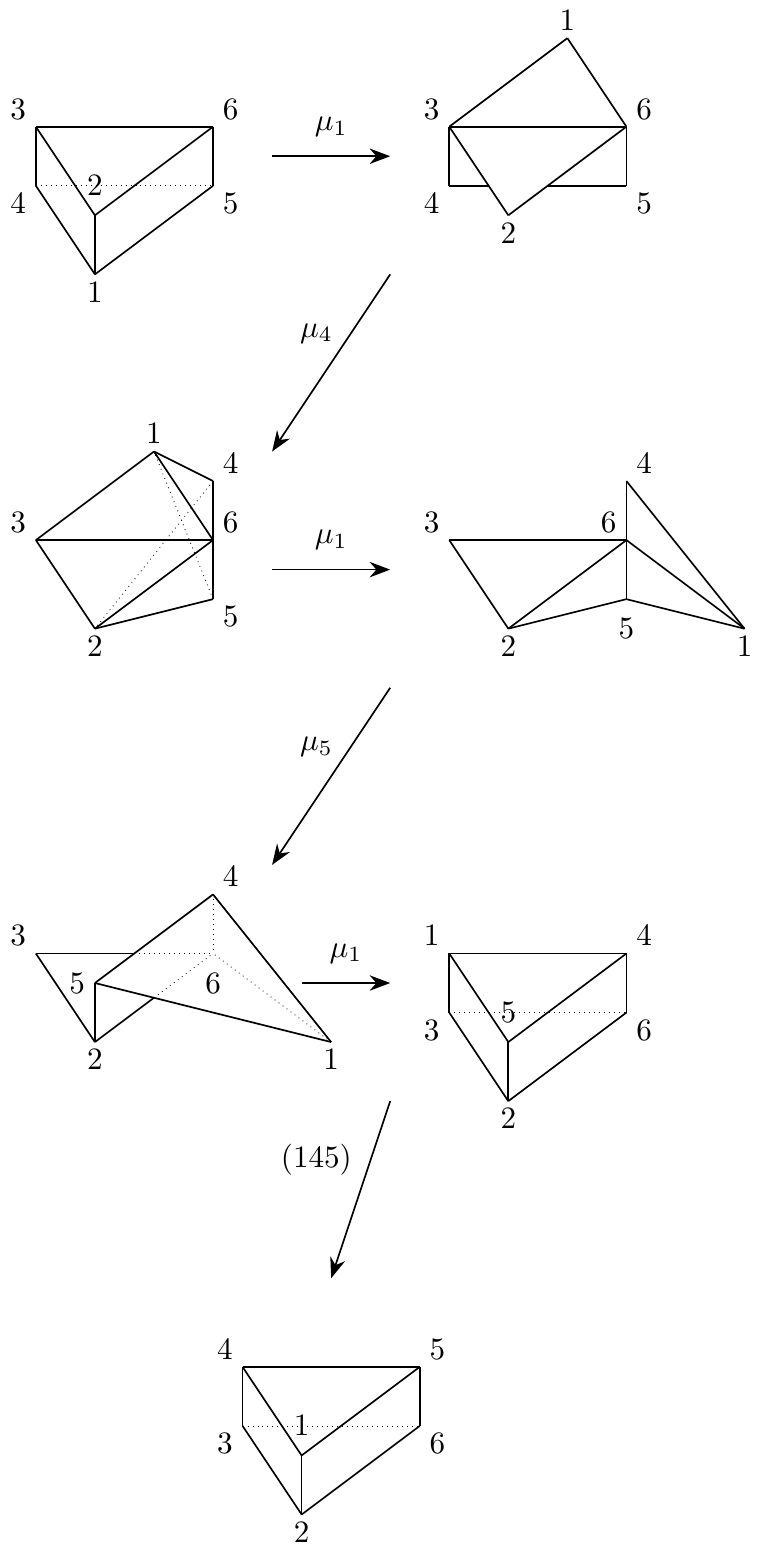}
            \caption{$\tau_4$ action on the prism}
            \label{fig:tau4OnPrism}
        \end{minipage}
    \end{figure}

    Similarly, Figure \ref{fig:tau4OnPrism} illustrates the action of $\tau_4$ on the prism. We can see that $\tau_4$ reflects the triangle formed by vertices $1,4,5$ about the triangle formed by the other vertices. Similarly, $\tau_5$ reflects the triangle formed by vertices $2,3,6$ about the triangle formed by the other vertices.

    In summary, the actions of $\tau_1$, $\tau_2$, and $\tau_3$ change the $x$ and $y$ coordinates of the prism while fixing the $z$ coordinate. This is the cross-section illustrated in Figure \ref{fig:lattice}.  The actions of $\tau_4$ and $\tau_5$ change the $z$ coordinate of the prism while fixing the $x$ and $y$ coordinates. One can check that using these five $\tau$-mutations, we can indeed move the original prism to any isometric prism in the $\mathbb{Z}^3$ lattice. Conversely, any sequence of $\tau$-mutations can be modeled as a prism walk in the $\mathbb{Z}^3$ lattice. As a result, we can associate every cluster variable $z_{i,j,k}$ to a point $(i,j,k)$ in $\mathbb{Z}^3$ and vice versa. In the next section, we will also construct an Aztec Castle $\mathcal{C}_{i,j,k}$ for each point $(i,j,k)$. For \textit{nice} Aztec Castles, we can express $z_{i,j,k}$ in terms of perfect matchings of $\mathcal{C}_{i,j,k}$ (see Theorem \ref{thm:LMCastle}).

\subsection{Aztec Castles}\label{subsec:castle}
    \subsubsection{Construction}\label{subsubsec:castleConstruction}

    In this section, we review the construction of \textbf{Aztec Castles} in \cite{LM}. In general, the construction of Aztec Castles consists of the following steps.

    \begin{itemize}\label{itm:construction}
        \item \textit{Step 1:} We associate each point $(i,j,k)\in\mathbb{Z}^3$ with a 6-tuple
        \[ (j+k, -i-j-k, i+k, j+1-k, -i-j-1+k, i+1-k). \]
        We will explain this 6-tuple later in this section.
        \item \textit{Step 2:} We draw a \textbf{(six-sided) contour} $\mathcal{C}(j+k, -i-j-k, i+k, j+1-k, -i-j-1+k, i+1-k)$ on the dP3 lattice in the direction in Figure~\ref{fig:contourDirection}. We start from a vertex in the center of a hexagon and define the unit length to be two ``long'' edges of the lattice. Note that if an element of the tuple is $0$, we simply skip the corresponding side, and if an element is negative, we traverse in the opposite direction.

        \begin{figure}[h!]
            \centering
            \includegraphics[scale=0.6]{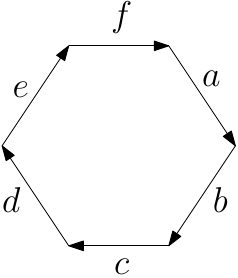}
            \caption{Contour direction}
            \label{fig:contourDirection}
        \end{figure}

        \item \textit{Step 3:} We remove every vertex outside the contour and keep only the vertices inside.

        \item \textit{Step 4:} We remove vertices along the sides as follows. For any side of positive (resp. negative) length, we remove all black (resp. white) vertices along that side. This side corresponds to a single vertex for any side of length zero. If any of the adjacent sides is negative, then this vertex is already removed. If this side is between two sides of length zero, we will also remove this vertex. The only case in which we keep this vertex is when it is between two sides of positive lengths.

        \item \textit{Step 5:} Finally, we have some "dangling" edges, which are edges in which one of the two incident vertices has degree $1$. These are the red edges in Figure~\ref{fig:castleConstructionStep4}. For these edges, we can either keep or remove the two incident vertices. The reason is that when considering perfect matchings of this graph, these edges are always forced to be in the matching, and they do not contribute to the weight of the matching (which will be defined in Section~\ref{subsubsec:castleWeight}). For most of this paper, we opt to keep these edges and call the resulting graph $\mathcal{C}_{i,j,k}$. 
        
    \end{itemize}

    Figure \ref{fig:castleConstruction} illustrates the construction of an Aztec Castle starting from the contour $\mathcal{C}(4,-3,0,3,-2,-1)$. The red point in Figure \ref{fig:castleConstructionStep2} marks the starting point of the contour.

    \begin{figure}[h!]
        \centering
        \begin{subfigure}{.32\textwidth}
            \centering
            \includegraphics[scale = 0.4]{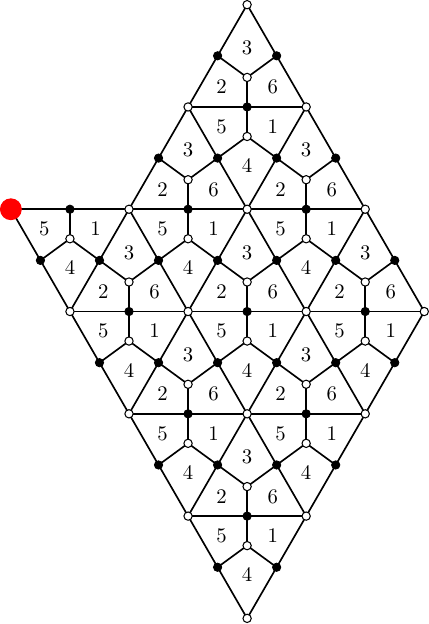}
            \caption{Step 3}
            \label{fig:castleConstructionStep2}
        \end{subfigure} \hspace{-5em}
        \begin{subfigure}{.32\textwidth}
            \centering
            \includegraphics[scale = 0.4]{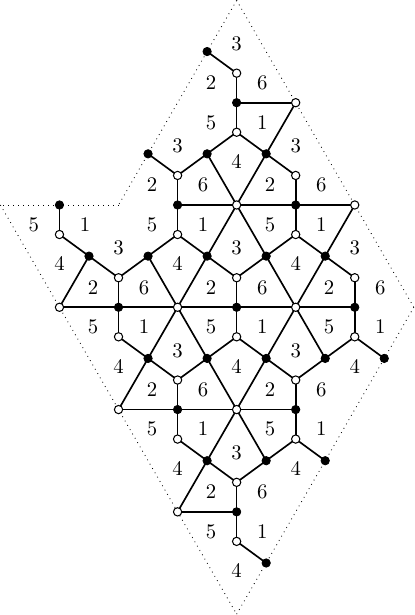}
            \caption{Step 4}
            \label{fig:castleConstructionStep3}
        \end{subfigure} \hspace{-5em}
        \begin{subfigure}{.32\textwidth}
            \centering
            \includegraphics[scale = 0.4]{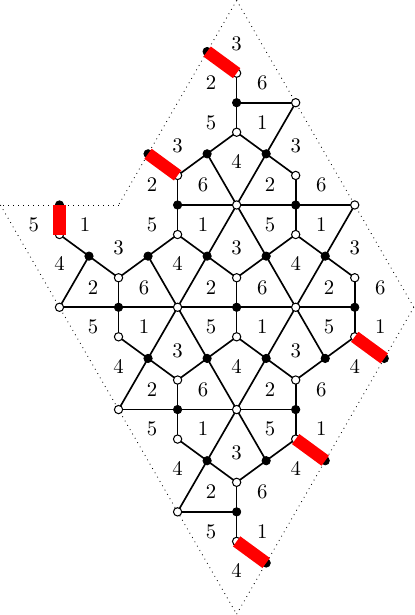}
            \caption{Step 5}
            \label{fig:castleConstructionStep4}
        \end{subfigure} \hspace{-5em}
        \begin{subfigure}{.32\textwidth}
            \centering
            \includegraphics[scale = 0.4]{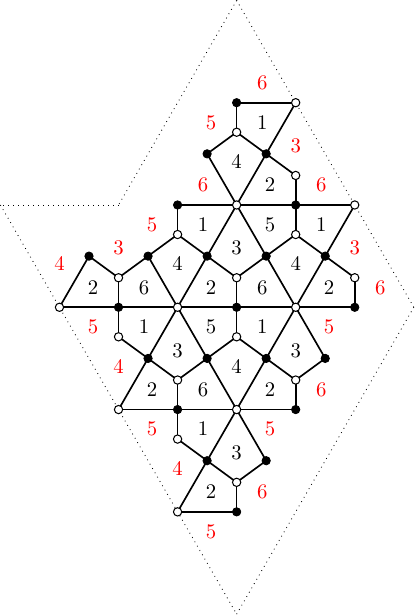}
            \caption{$\Hat{\mathcal{C}}_{i,j,k}$}
            \label{fig:castleConstructionStep5}
        \end{subfigure}
        \caption{Aztec Castle construction}
        \label{fig:castleConstruction}        
    \end{figure}

    Now, we introduce some terminologies that will be used in Section \ref{subsubsec:castleWeight} when discussing the weight of a perfect matching of Aztec Castles. Since the dangling edges do not contribute to the weight, we delete all dangling edges and subsequently all faces that do not have any incident edges left. We define the graph after such deletion $\Hat{\mathcal{C}}_{i,j,k}$. Also, we can define \textit{interior} of the graph, denoted as $\mathcal{C}^\circ_{i,j,k}$, to be the set of faces that have four edges in $C_{i,j,k}$. We define the \textit{boundary} of the graph, denoted as $\partial\mathcal{C}_{i,j,k}$, to be the set of faces with fewer than four edges. For example, Figure \ref{fig:castleConstructionStep5} shows the faces of $\Hat{\mathcal{C}}_{i,j,k}$. The faces in $\partial\mathcal{C}_{i,j,k}$ are colored red, and the remaining black faces are in $\mathcal{C}^\circ_{i,j,k}$.

    We now explain the 6-tuple $(a,b,c,d,e,f)$ in Step 1, as described in Lemma 5.3 of \cite{LM}. First of all, for the contour to be closed, we want
    \[
    a+b=d+e \quad \text{and} \quad c+d=f+a.
    \]
    We also want $b+c=e+f$, but this is implied by the above two relations, so we do not include this condition. Finally, since we will work with perfect matchings of this graph, we want the same number of white vertices and black vertices. By counting the number of vertices deleted on each side in step 3 of the construction, Lai and Musiker introduced a third condition which allows for an equal number of black and white vertices:
    \[
    a+b+c+d+e+f=1.
    \]
    One can check that the tuple
    \[ (j+k, -i-j-k, i+k, j+1-k, -i-j-1+k, i+1-k) \]
    satisfies all three aforementioned conditions.

    Figure \ref{fig:allRegionPos} shows the possible Aztec Castle shapes when $k\geq 1$. There are Aztec Castles from contours that have self-intersections. However, in this paper, we do not consider such Aztec Castles. The reason is apparent from Theorem \ref{thm:LMCastle}. Hence, in this paper, when we say Aztec Castles, we assume no self-intersections, i.e. we will only consider points outside the yellow region in Figure \ref{fig:allRegionPos}.

    \begin{figure}[h!]
        \centering
        \includegraphics[scale = 0.25]{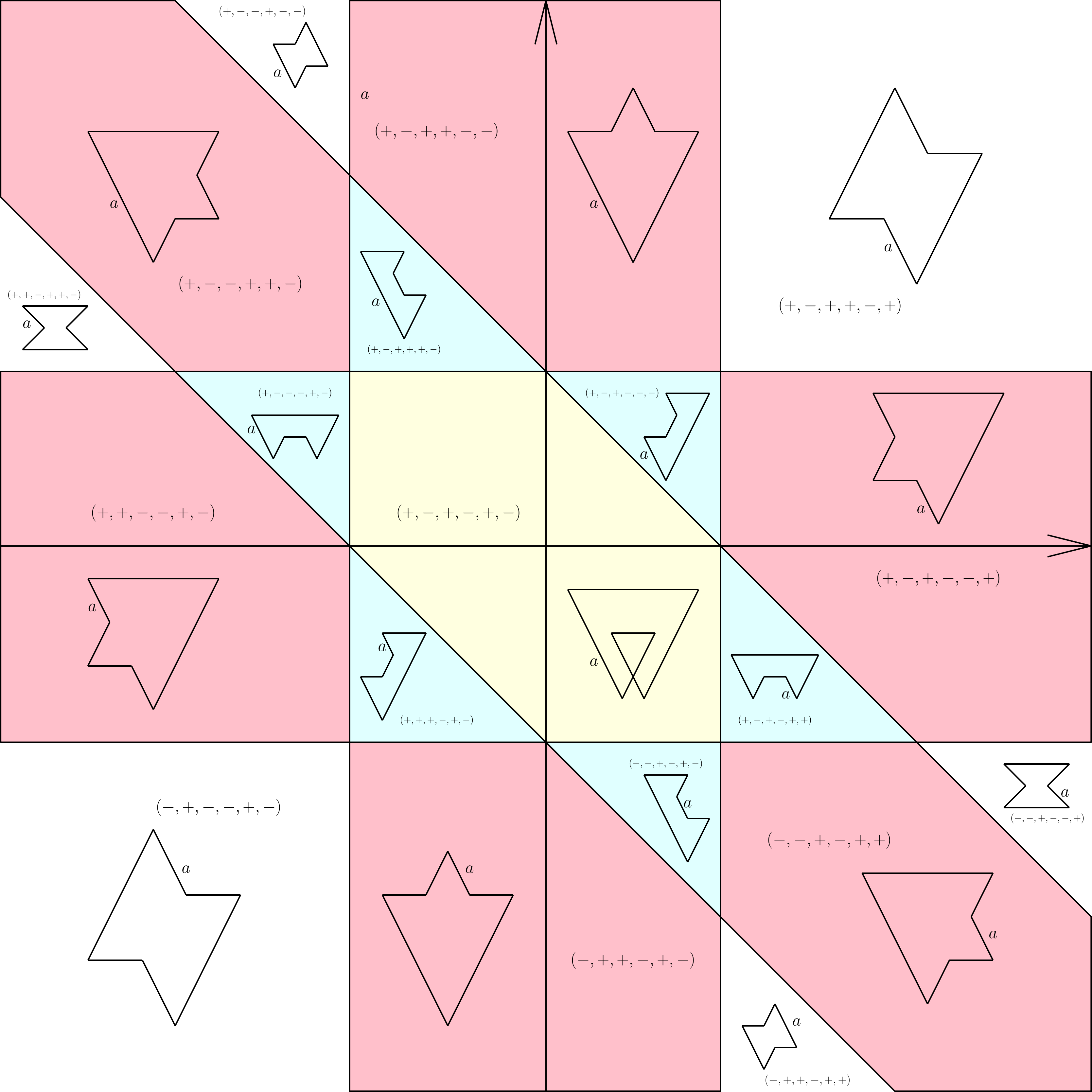}
        \caption{Possible shapes of Aztec Castles when $k\geq 1$}
        \label{fig:allRegionPos}
    \end{figure}

    \begin{remark}\label{rem:sign-pattern}
        Figure \ref{fig:allRegionPos} only shows the possible Aztec Castle shapes when $k\geq 1$. When $k\leq 0$, the sign patterns differ by a cyclic shift of three indices, i.e. $(s_1,s_2,s_3,s_4,s_5,s_6)\rightarrow(s_4,s_5,s_6,s_1,s_2,s_3)$. For example, the sign pattern of the top middle pink region when $k\geq 1$ is $(+,-,+,+,-,-)$ as in Figure \ref{fig:allRegionPos}. Then, when $k\leq 0$, the sign pattern of this region is $(+,-,-,+,-,+)$. Observe that the shapes in the white region actually stay the same under this cyclic shift.
    \end{remark}

    \begin{remark}\label{rem:sign-change}
        From Figure \ref{fig:allRegionPos}, we have the sign patterns break into orbits under rotations as follows. In the white regions, the sign patterns are $(+,-,+,+,-,+)$, $(-,+,-,-,+,-)$, and their cyclic rotations. In the pink regions, the sign patterns are $(+,+,-,-,+,-)$, $(-,-,+,+,-,+)$, and their cyclic rotations. In the blue regions, the sign patterns are $(+,+,+,-,+,-)$, $(-,-,-,+,-,+)$, and their cyclic rotations. Note that it suffices to discuss only the $k\ge 1$ cases since the $k\le 0$ cases will follow using Remark \ref{rem:sign-pattern}. This is because, for each region, a sign pattern will include exactly half of the subregions up to cyclic rotations. For example, in the pink region, three cyclic rotations of $(+,+,-,-,+,-)$ and three cyclic rotations of $(-,-,+,+,-,+)$ each show up as subregions for $k\ge 1$ cases and the rest for $k\le 0$. Similar results occur for the blue regions, as well as for the white regions (except there are only three cyclic rotations of each representative in the latter case).  Hence, up to cyclic rotation, then it is sufficient to only consider the $k\ge 1$ case for our purpose.
    \end{remark}
    
    \begin{remark} \label{rem:sign-pattern-4}
    By the explicit description of sign patterns given by Remark \ref{rem:sign-change}, it follows that for generic non-self-intersecting Aztec Castles, when traversing around the contours, the signs of the sides change exactly four times.
    \end{remark}
    \subsubsection{Weight}\label{subsubsec:castleWeight}

    For every Aztec Castle $\mathcal{C}_{i,j,k}$, we will use the common definition of the weight of a perfect matching $m$ as defined by Speyer in \cite{S}. Here we write it as Speyer expresses it for a general bipartite planar graph $G$. Fix a perfect matching $m$, and for a face $f$ of a $2s$-gon in $G$, let
    \begin{align*}
        \epsilon(f) = \begin{cases}
         (s-1)-|E(f) \cap m| &\text{ if }f\in G^{\circ},\\
        \lfloor\frac{s}{2}\rfloor-|E(f) \cap m|  &\text{ if }f\in \partial G.
    \end{cases}
    \end{align*}
    Recall that $G^{\circ}$ denotes the set of interior faces of $G$, and $\partial G$ denotes the set of boundary faces. Then, the weight of $m$ is defined to be
    \[ wt(m) = \prod_{f \in \hat{G}}x_f^{\epsilon(f)}. \]
    In our discussion of Aztec Castles, we let $G$ be the graph $\mathcal{C}_{i,j,k}$, $\hat{G}$ to be defined by deleting the dangling edges in Step 5 (see Figure \ref{fig:castleConstructionStep5}). Also, since all faces are $4$-gons, the formula for the weight can be simplified to
     \[
         wt(m)=\prod_{f\in \hat{G}} x_f^{(1-|E(f) \cap m|)}.
     \]
    With the weight of each perfect matching defined, we have the following definition of the weight of an Aztec Castle:
     \[
         wt(\mathcal{C}_{i,j,k}) = \sum_{m}wt(m).
     \]
    Lai and Musiker proved the following theorem about Aztec Castles.

     \begin{thm}[\cite{LM}]
         \label{thm:LMCastle}
         Let $z_{i,j,k}$ be the cluster variable at point $(i,j,k)$. Then if $\mathcal{C}(i,j,k)$ does not have self-intersections, we have

         $$z_{i,j,k}=wt(\mathcal{C}_{i,j,k}).$$
     \end{thm}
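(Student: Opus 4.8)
The plan is to prove, by induction on the number of $\tau$-mutations needed to produce $z_{i,j,k}$ — equivalently, on the size of the Aztec Castle $\mathcal{C}_{i,j,k}$ — that the weights $wt(\mathcal{C}_{i,j,k})$ satisfy the same initial conditions and the same recurrences as the cluster variables $z_{i,j,k}$. For the base case I would check the claim directly at the six points $(0,-1,1),(0,-1,0),(-1,0,0),(-1,0,1),(0,0,1),(0,0,0)$ that carry the initial cluster: for each of them the $6$-tuple of Step~1 has entries only in $\{0,\pm1\}$, the contour is degenerate, and one verifies by hand that, after the Step~4--5 clean-up, $wt(\mathcal{C}_{i,j,k})$ equals the corresponding initial variable $x_m$. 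This is a finite check.

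For the inductive step, each $\tau_a$ carries the prism to an adjacent configuration, so it suffices to show that the weights $wt(\mathcal{C}_\bullet)$ satisfy the exchange relation produced by the composite mutation defining $\tau_a$. Decomposing $\tau_a$ into its elementary mutations $\mu_i$ and tracking the relabelings turns these into Plücker-type three-term relations of the shape
\[
  wt(\mathcal{C}_{p})\,wt(\mathcal{C}_{p'}) \;=\; wt(\mathcal{C}_{p_1})\,wt(\mathcal{C}_{p_2}) \;+\; wt(\mathcal{C}_{p_3})\,wt(\mathcal{C}_{p_4}),
\]
where $p$ is the current lattice point, $p'$ the next one, and $p_1,\dots,p_4$ are nearby points; among the six contours involved there is a largest one, call it $G$, and the other five are obtained from $G$ by peeling off a corner row (or two) of the Aztec Castle. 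To establish each identity I would apply graphical (Kuo) condensation, the technique already used in \cite{LMNT,LM}: choose four boundary vertices $a,b,c,d$ of $G$ in a cyclic arrangement and coloring that fit one of the forms of Kuo's theorem, so that after absorbing the forced dangling edges the pair-deletions $G\setminus\{a,b\}$, $G\setminus\{c,d\}$, $G\setminus\{a,d\}$, $G\setminus\{b,c\}$ recover $\mathcal{C}_{p_1},\dots,\mathcal{C}_{p_4}$ while $G\setminus\{a,b,c,d\}$ recovers the smallest of the six. Condensation then yields the three-term identity for the edge-weighted matching counts of these graphs; multiplying through by the covering monomials that convert edge-weighted counts into Speyer's face-weighted sums \cite{S} and checking that these monomials telescope upgrades it to the displayed relation, so that the inductive hypothesis for the five smaller castles delivers the formula for $G$. (Alternatively, one could try to track the finite window $\mathcal{C}_{i,j,k}$ through the urban-renewal moves on the dP3 brane tiling induced by the toric mutations, but the boundary bookkeeping makes condensation the cleaner route.)

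The hard part will be making this weight accounting go through uniformly. The shape of $\mathcal{C}_{i,j,k}$ depends on which region of Figure~\ref{fig:allRegionPos} contains $(i,j,k)$ and on the sign of $k$ (Remarks~\ref{rem:sign-pattern}--\ref{rem:sign-pattern-4}), and near the walls of these regions the contour degenerates — a zero-length side, or two corners colliding — so both the placement of the four condensation vertices and the covering-monomial computation must be organized case by case; the moves $\tau_4$ and $\tau_5$, being products of five elementary mutations, give the most involved relations. Once every case has been matched to a condensation identity carrying the correct monomial prefactor, the induction closes and gives $z_{i,j,k}=wt(\mathcal{C}_{i,j,k})$ for all non-self-intersecting contours.
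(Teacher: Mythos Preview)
This theorem is not proved in the present paper: it is quoted from \cite{LM} as background (note the attribution in the theorem header), and the paper supplies no argument for it. So there is no ``paper's own proof'' to compare against here.

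That said, your sketch is the right one and matches the strategy actually carried out in \cite{LM}: induction via the $\tau$-recurrences, with the inductive step established by Kuo condensation on the Aztec Castle, the four deletion vertices chosen on the boundary so that the five smaller graphs appearing in the three-term identity are the neighboring castles, and the covering-monomial bookkeeping converting edge-weighted matching sums into Speyer's face-weighted $wt$. Your caveats about the region-by-region case analysis and the degenerate contours are also accurate; \cite{LM} handles exactly this casework. If you want to flesh this into a self-contained proof you would be reproducing that paper, which is why the current paper simply cites it.
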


\subsection{Framed quiver and minimal matching}\label{subsec:framed}

\subsubsection{Framed quiver}

    For a quiver $Q$, the associated \textbf{framed quiver} $\hat{Q}$ is a directed graph in which
    \[
    V_{\hat{Q}} = V_Q~ \cup~\{v_{i+n}\mid v_i\in V_Q\} \quad \text{and} \quad E_{\hat{Q}} = E_Q~\cup~\{v_i \rightarrow v_{i+n}\mid v_i\in V_Q\}.
    \]
    For instance, the framed quiver of the dP3 quiver in Figure \ref{fig:dP3quiver} is the quiver in Figure \ref{fig:dP3Framed}. The additional vertices in $\hat{Q}$ are called \textit{frozen vertices}, which means that we never mutate at these vertices. We also associate new cluster variables $\{y_1,\ldots,y_n\}$ to the frozen vertices. Then, every cluster variable for this framed quiver is a Laurent polynomial in $\mathbb{Z}[x_1^{\pm 1},\ldots, x_n^{\pm 1}, y_1,\ldots,y_n]$, i.e. for all cluster variable $\hat{z}$, one can write
    \[\hat{z} = \dfrac{P(x_1,\ldots,x_n,y_1,\ldots,y_n)}{x_1^{d_1}\ldots x_n^{d_n}}.\]
    Following \cite[Theorem 3.7, Conjecture 5.4]{FZ4} as well as \cite[Theorem 1.7, Prop 3.1]{DWZ}, there is a unique term in this $\mathbb{Z}[x_1^{\pm 1},\ldots, x_n^{\pm 1}, y_1,\ldots,y_n]$ expansion for $\hat{z}$ with no $y_i$'s occuring as a factor.
    
    Given Theorem \ref{thm:LMCastle}, the cluster variables $z_{i,j,k}$ are expected to be generating functions (or termed weighted sums) over perfect matchings of the corresponding Aztec Castles $\mathcal{C}_{i,j,k}$. In this case, to account for the extra $y_i$'s, instead of simply taking the weight defined in Section \ref{subsubsec:castleWeight}, we also need to introduce the notion of \textbf{height} for each perfect matching.
    
    \begin{definition}\label{def:height}
        For each perfect matching $m$, the \textbf{height} of $m$ is ${\sf ht}(m) =f$, where $f$ is the face in a closed loop created by superimposing $m$ with the minimal matching, where the minimal matching is defined as the unique matching whose weight (in $x_i$'s) agrees with the unique term in $\hat{z}$ without a $y_i$ in it.    
    \end{definition}
    
    As suggested by Definition \ref{def:height}, our main question is: what is the \textit{correct minimal matching} of Aztec Castles?

    Fortunately, by Musiker--Schiffler (\cite[Remark 5.3]{musiker2010cluster}), the minimal matching of Aztec Castles $\mathcal{C}_{i,j,k}$ is the same as the minimal element of the twist down lattice of $\mathcal{C}_{i,j,k}$ studied by Propp in \cite{propp2002lattice}.  See also \cite{MMSBV} for a more recent treatment for general bipartite plane graphs.

\subsubsection{Twist down lattice of perfect matchings}\label{subsubsec:twist-down}
    
    Given graph $\mathcal{C}_{i,j,k}$ (defined as an Aztec Castle with the dangling edges removed), we may temporarily forget the labeling of faces in $\partial\mathcal{C}_{i,j,k}$ and consider the exterior to be a single unbounded face which we denote as $f^*$. A \textbf{face twist} on a perfect matching is the operation of removing the edges that form an alternating cycle of a face and inserting the complementary edges. We call a face \textbf{twistable} if a face twist can be applied to it. In other words, a $2s$-gon face is twistable if it has $s$ edges in the perfect matching, and a face twist on this face will replace the $s$ edges by the other $s$ edges.
    
    We say a twistable face of the Aztec Castle is \textbf{positive} (resp. \textbf{negative}) if the edges in the face when directed from black vertices to white vertices, circle the face in a counterclockwise (resp. clockwise) direction. A \textbf{twist down} is a face twist that converts a positive face to a negative face, and a \textbf{twist up} is a face twist that converts a negative face to a positive face. This face twist operation gives a distributive lattice on the set of perfect matchings of an Aztec Castle. 

    \begin{thm}[{\cite[Theorem 2]{propp2002lattice}}]\label{thm:face-twist-lattice}
        Let $\mathcal{M}$ be the (non-empty) set of perfect matchings of an Aztec Castle $\mathcal{C}_{i,j,k}$. If we say that one perfect matching $M$ covers another perfect matching $N$ exactly when $N$ is obtained from $M$ by twisting down at a face other than $f^*$, then the covering relation makes $\mathcal{M}$ into a distributive lattice.
    \end{thm}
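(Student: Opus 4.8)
The plan is to prove the statement the way Propp does, via the classical height‑function encoding of perfect matchings of a finite bipartite plane graph (Thurston's idea in the tiling setting, as extended in \cite{propp2002lattice}); I sketch it adapted to $\mathcal{C}_{i,j,k}$. Orient every edge of $\mathcal{C}_{i,j,k}$ from its black endpoint to its white endpoint and fix once and for all a reference matching $M_0\in\mathcal{M}$. For any $M\in\mathcal{M}$ the symmetric difference $M\triangle M_0$ is a disjoint union of simple closed curves in the plane, so as a $1$-chain the difference $\mathbf 1_M-\mathbf 1_{M_0}$ (with the black‑to‑white orientation) is a cycle; since $\mathcal{C}_{i,j,k}$ is planar this cycle bounds, and there is a unique integer-valued function $h_M$ on the bounded faces, with $h_M(f^\ast)=0$, whose jump across each edge $e$ records whether $e\in M\triangle M_0$. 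Concretely, crossing $e$ from the face on its left to the face on its right (in the black‑to‑white orientation) changes the height by $+1$, $-1$, or $0$, and which of these occurs is governed by whether $e\in M$ together with data depending only on $e$ (its colour orientation, whether it is a ``long'' or ``short'' edge, and whether $e\in M_0$).

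First I would check that $M\mapsto h_M$ is injective (equal height functions force equal symmetric differences with $M_0$, hence $M=M'$) and identify the image: a function $h$ on bounded faces with $h(f^\ast)=0$ equals some $h_M$ precisely when, across every edge $e$, the jump $h(f)-h(f')$ lies in the two‑element ``legal'' set attached to $e$ — one of the four sandwich constraints of the form $h(f')\le h(f)\le h(f')+1$ or $h(f)\le h(f')\le h(f)+1$. Define the order on $\mathcal{M}$ by $M\preceq N\iff h_M\le h_N$ pointwise. The key lemma is that the set of legal height functions is closed under pointwise $\min$ and pointwise $\max$: given legal $h,h'$ and an edge $e$, one compares on the two faces bordering $e$ which of $h,h'$ is larger and checks each of the resulting cases against the sandwich constraint for $e$ — a short but genuinely case‑based verification, and exactly where the bipartite structure is used. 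Granting it, $(\mathcal{M},\preceq)$ embeds as a sublattice of the product of chains $\prod_{f}\mathbb{Z}$, with $h_{M\wedge N}=\min(h_M,h_N)$ and $h_{M\vee N}=\max(h_M,h_N)$; hence it is a finite distributive lattice with a unique minimum and maximum.

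It then remains to match covering relations with face twists. Because every legal jump is in $\{-1,0,+1\}$, two matchings whose height functions agree off a single bounded face $f$ differ there by at most $1$; so $h_M$ covers $h_N$ iff $h_N=h_M-\mathbf 1_{\{f\}}$ for some bounded $f\neq f^\ast$ such that $h_M-\mathbf 1_{\{f\}}$ is still legal. Doing a face twist at $f$ replaces $M$ by $M\triangle\partial f$, which changes $h$ by $\pm\mathbf 1_{\{f\}}$ and nothing else; checking the legality condition shows that decreasing by $\mathbf 1_{\{f\}}$ is possible exactly when the boundary of $f$ forms an alternating cycle in $M$ circling $f$ counterclockwise from black to white, i.e. when $f$ is positive and twistable. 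Thus ``$N$ obtained from $M$ by a twist down at a face $\neq f^\ast$'' is precisely the covering relation of $(\mathcal{M},\preceq)$, which is the distributive lattice of Theorem~\ref{thm:face-twist-lattice}. (The face $f^\ast$ is excluded because a twist there would alter the normalization $h(f^\ast)=0$, i.e. it is a global move that is not a covering step.)

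The main obstacle is the closure lemma in the second paragraph — that $\min$ and $\max$ of legal height functions stay legal; everything else is bookkeeping once the encoding is in place. A shorter alternative is simply to invoke \cite[Theorem~2]{propp2002lattice} verbatim, since an Aztec Castle $\mathcal{C}_{i,j,k}$ is a finite bipartite plane graph and hence lies within the scope of Propp's general result; the sketch above is why that result holds.
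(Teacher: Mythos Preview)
Your proposal is correct, and in fact goes further than the paper does: the paper gives no proof of this statement at all, simply citing it as \cite[Theorem~2]{propp2002lattice} since an Aztec Castle is a finite bipartite plane graph and falls under Propp's general result. The height-function argument you sketch is precisely Propp's own proof, so your approach is entirely consistent with (indeed, an unpacking of) the paper's one-line citation, which you already note as the shorter alternative in your final paragraph.
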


    Figure \ref{fig:twistDownLattice} gives an example of Theorem \ref{thm:face-twist-lattice}. Furthermore, the unique minimal element of this lattice is the unique perfect matching in which no twisting down at a face other than $f^*$ is possible. In other words, every possible face twist at a face other than $f^*$ is at a negative face. Hence, to find the correct minimal matchings of Aztec Castles, it suffices to check this condition.

    \begin{cor}\label{cor:twist-condition}
        If a perfect matching of an Aztec Castle has no positive twistable face (except the unbounded face), then it is the minimal matching.
    \end{cor}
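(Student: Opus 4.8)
The plan is to deduce \Cref{cor:twist-condition} directly from Propp's \Cref{thm:face-twist-lattice} together with a standard fact about finite lattices, after unwinding the definition of a twist down. First I would record the ambient setup: the set $\mathcal{M}$ of perfect matchings of $\mathcal{C}_{i,j,k}$ is finite, since $\mathcal{C}_{i,j,k}$ is a finite graph, and it is non-empty by the hypothesis of \Cref{thm:face-twist-lattice}. Hence the distributive lattice $(\mathcal{M},\le)$ furnished by \Cref{thm:face-twist-lattice} — whose covering relation is ``$M$ covers $N$ iff $N$ is obtained from $M$ by twisting down at a face other than $f^*$'' — has a unique minimum element, which I will call $M_{\min}$. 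By the Musiker--Schiffler identification recalled just before \Cref{subsubsec:twist-down}, this $M_{\min}$ is exactly the minimal matching in the sense of \Cref{def:height}, so it suffices to prove that a matching with no positive twistable face other than $f^*$ equals $M_{\min}$.

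The second step is to translate the hypothesis of the corollary into lattice language. By definition a face twist can be performed at a face $g$ precisely when $g$ is twistable, and such a twist is a \emph{twist down} precisely when $g$ is positive; a twist down at a twistable face produces another perfect matching, hence another element of $\mathcal{M}$. Consequently, for a perfect matching $M$, the assertion ``$M$ has no positive twistable face except possibly $f^*$'' is literally the same as ``there is no face $g\neq f^*$ at which $M$ can be twisted down,'' which by the definition of the covering relation in \Cref{thm:face-twist-lattice} means exactly that $M$ covers no element of $\mathcal{M}$.

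The final step invokes the elementary fact that in a finite lattice the minimum is the unique element covering nothing. Indeed, if $M$ covers no element of $\mathcal{M}$, then $M$ is a minimal element of the poset $\mathcal{M}$; but a lattice has a unique minimal element, namely its minimum $M_{\min}$, so $M=M_{\min}$. Stringing the three steps together: any perfect matching of $\mathcal{C}_{i,j,k}$ with no positive twistable face other than the unbounded face $f^*$ coincides with $M_{\min}$, the minimal matching, as claimed.

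I do not expect a genuine obstacle here: the substantive work — showing that the face-twist operation organizes $\mathcal{M}$ into a distributive lattice — is already contained in \cite{propp2002lattice} and imported as \Cref{thm:face-twist-lattice}, and the coincidence of the lattice minimum with the minimal matching of \Cref{def:height} is supplied by the cited Musiker--Schiffler remark. The only points needing (routine) care are verifying that ``a twist down is available at $g$'' is synonymous with ``$g$ is a positive twistable face,'' and noting that a finite lattice has no minimal element other than its minimum; both are immediate from the definitions.
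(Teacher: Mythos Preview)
Your argument is correct and matches the paper's own reasoning: the paper likewise observes (in the paragraph immediately preceding the corollary) that the unique minimum of Propp's distributive lattice is characterized as the matching admitting no twist down at any bounded face, which is precisely the condition of having no positive twistable face other than $f^*$. Your write-up just makes the finite-lattice step more explicit.
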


    \begin{figure}[h!]
        \centering
        \includegraphics[scale = 0.7]{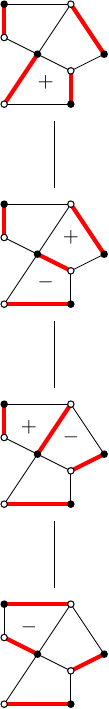}
        \caption{Twist down lattice of an Aztec Castle}
        \label{fig:twistDownLattice}
    \end{figure}

\section{Minimal matching of Aztec Castles}\label{sec:minCastle}

\subsection{Construction}\label{subsec:construction}

    In this section, we will give the construction of the minimal matching, which will be proved later in Section \ref{sec:proof}. Specifically, we will give the construction for generic cases when none of the sides is $0$. When one or more sides equal $0$, we give them arbitrary signs. The construction has two main steps.

    \begin{itemize}
        \item \textit{Step 1: Dividing the Castle into sectors.} We traverse along the sides of the contour in a clockwise direction. At each corner, we perform one of the following actions:
        \begin{itemize}
            \item If we move from a positive side to a positive side, draw a straight line in the direction of the second side.
            \item If we move from a negative side to a negative side, draw a straight line in the direction of the first side.
            \item If we move from a negative side to a positive side, draw a staircase diagonally, with the first step lying on the positive side.
            \item If we move from a positive side to a negative side, no action is required.
        \end{itemize}
        By Remark \ref{rem:sign-pattern-4}, we move from a negative side to a positive side exactly twice and move between two sides with the same sign exactly twice. Hence, there are exactly two straight lines and two staircases. 
        
        Furthermore, while it is not immediate from the construction, these two straight lines and two staircases intersect at two points (one per pair of line and staircase), and the two points can be connected by a straight line on the lattice. We connect these two points by that straight line, and we call this line the \textbf{zero line}. This line is parallel to two opposing sides of the contour (see Remark \ref{rem:zero-line-direct}). We will show the existence of this zero line in Section \ref{subsec:zero-line}.
        
        After this step, the Castle is divided into four sectors, two of them are each incident to one side of the contour while the other two are each incident to two sides.
        
        \item \textit{Step 2: Covering each sector according to the side.} We will use a universal covering for each sector, and the covering is determined by the side of the contour that the sector is incident to as in Table~\ref{table:covering}. Here, for example, by ``$1-4$'' we mean the edge between face $1$ and $4$.
        
        \begin{table}[h!]
            \centering
            \begin{tabular}{ |c|c|c| } 
             \hline
             Side & Positive & Negative \\ 
             \hline
             \hline
             $a$ & $1-4,2-5,3-6$ & $1-5,2-4,3-6$ \\
             \hline
             $b$ & $1-4,2-6,3-5$ & $1-4,2-5,3-6$ \\
             \hline
             $c$ & $1-3,2-6,4-5$ & $1-4,2-6,3-5$ \\
             \hline
             $d$ & $1-6,2-3,4-5$ & $1-3,2-6,4-5$ \\
             \hline
             $e$ & $1-5,2-3,4-6$ & $1-6,2-3,4-5$ \\
             \hline
             $f$ & $1-5,2-4,3-6$ & $1-5,2-3,4-6$ \\
             \hline
            \end{tabular}
            \caption{Universal covering for each case}
            \label{table:covering}
        \end{table} 
    \end{itemize}

    Figure \ref{fig:ConsExample} shows an example of this construction. The contour is $\mathcal{C}(5,-9,6,2,-6,3)$, giving the Castle $\mathcal{C}_{4,3,2}$. In Figure \ref{subfig:ConsDivide}, from side $a$ to $b$, and from side $d$ to $e$, we move from positive sides to negative sides, so we do nothing. From side $b$ to $c$, and from side $e$ to $f$, we move from negative sides to positive sides, so we draw staircases (\textcolor{violet}{colored purple}) with the first steps lying on the positive sides $c$ and $f$. From side $c$ to $d$, and from side $f$ to $a$, we move from positive sides to positive sides, so we draw straight lines (\textcolor{red}{colored red}) in the direction of sides $a$ and $d$. Furthermore, the two straight lines and staircases meet at two points that can be connected by another straight \textcolor{blue}{blue line}. (In Section \ref{subsec:zero-line}, we will call the blue line the zero line and show its existence). Finally, in Figure \ref{subfig:ConsMatching}, we cover each sector according to Table \ref{table:covering}. We refer the readers to Appendix \ref{append:examples} for more examples.

    \begin{figure}[h!]
    \centering
        \begin{subfigure}[b]{0.4\textwidth}
            \centering
            \includegraphics[width = \textwidth]{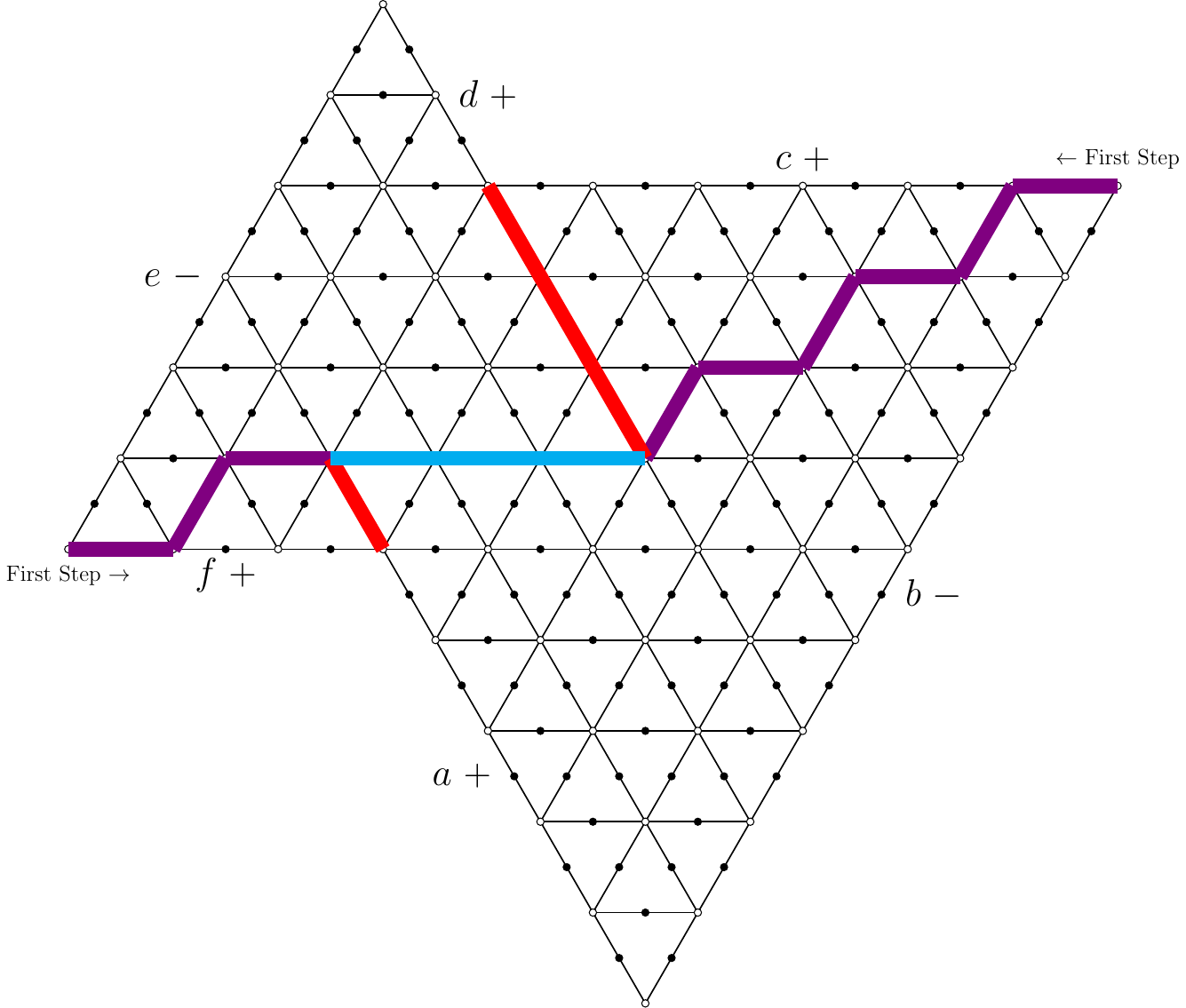}
            \caption{Four sectors}
            \label{subfig:ConsDivide}
        \end{subfigure}
     \quad
        \begin{subfigure}[b]{0.4\textwidth}
            \centering
            \includegraphics[width = \textwidth]{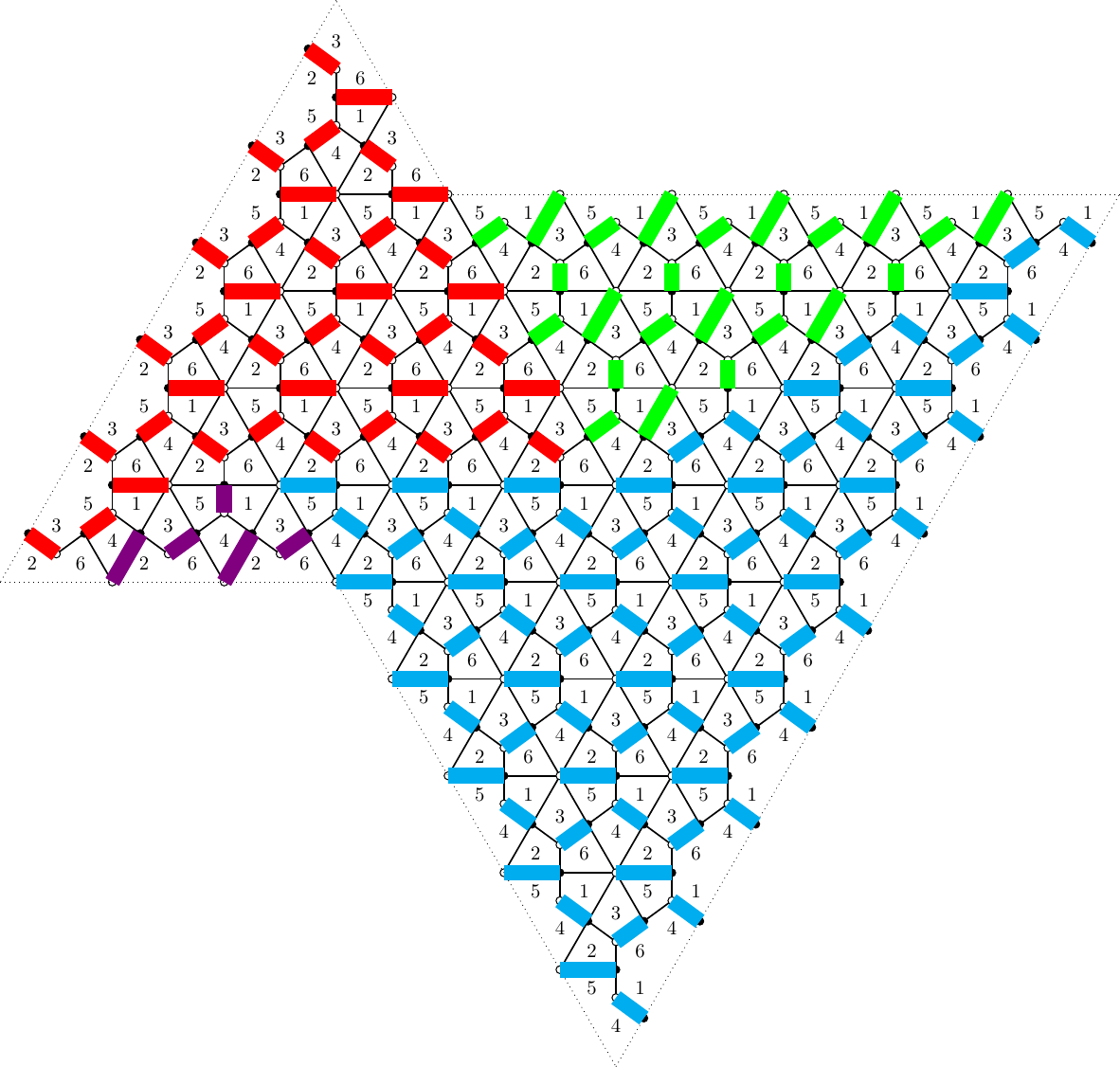}
            \caption{Minimal matching}
            \label{subfig:ConsMatching}
        \end{subfigure}
        \caption{Four sectors and minimal matching of Castle $\mathcal{C}_{4,3,2}$}
        \label{fig:ConsExample}
    \end{figure}
    Note that in Table~\ref{table:covering}, the matching when one side is positive is the same as when the next side is negative. This is because in step 1 when moving from a positive side to a negative side, we do nothing, so these two sides are incident to the same sector. Thus, they should have the same universal covering. 
    
    Also when two consecutive sides have the same sign, their universal covering has one edge in common. We will see that this leads to a "smooth transition" between the two corresponding sectors. As a result, there will be no twistable face along the straight line dividing the two sectors. The twistable faces do not come from the interior of each sector either. Therefore, they only come from the two staircases and the zero line.

    \begin{remark}
        There seems to be a small ambiguity in step 2 above for the covering on the zero line as we may have the choice of which covering to use. However, depending on the parity of the staircases' length, there is only a unique covering for this line.
    \end{remark}

    \begin{remark}\label{rem:zero-line-direct}
        In step 1 of the construction, one can see that the sign pattern of the contour determines the direction of the staircases, the straight lines, and the zero line. Recall from Remark \ref{rem:sign-pattern} that the sign patterns are $(+,-,\textcolor{red}{\Hat{+}},+,-,\textcolor{red}{\Hat{+}})$, $(\textcolor{red}{\Hat{-}},+,-,\textcolor{red}{\Hat{-}},+,-)$, $(\textcolor{red}{\Hat{+}},+,-,\textcolor{red}{\Hat{-}},+,-)$, $(-,\textcolor{red}{\Hat{-}},+,+,\textcolor{red}{\Hat{-}},+)$, $(\textcolor{red}{\Hat{+}},+,+,\textcolor{red}{\Hat{-}},+,-)$, and $(-,-,\textcolor{red}{\Hat{-}},+,-,\textcolor{red}{\Hat{+}})$. Here, the (opposing) sides with the red signs\footnote{and hats in case of reading this in grayscale} are those parallel to the zero line. For example, if the sign pattern is $(+,-,\textcolor{red}{\Hat{+}},+,-,\textcolor{red}{\Hat{+}})$, then the zero line is parallel to side $c$ and $f$. On the other hand, if the sign pattern is $(\textcolor{red}{\Hat{-}},+,-,\textcolor{red}{\Hat{-}},+,-)$, the zero line is parallel to side $a$ and $d$.
    \end{remark}

    \begin{thm}\label{thm:min-match}
        The construction above gives a valid perfect matching, and it is the minimal matching for Aztec Castles.
    \end{thm}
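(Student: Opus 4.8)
The plan is to verify the two claims separately: (1) that the construction yields a well-defined perfect matching of $\mathcal{C}_{i,j,k}$, and (2) that this matching is the minimal element of the twist-down lattice, for which by Corollary~\ref{cor:twist-condition} it suffices to show that no face other than $f^*$ is a positive twistable face. For (1), I would first establish the existence of the zero line (deferred to Section~\ref{subsec:zero-line}) and check that the four sectors genuinely tile the Aztec Castle with no overlaps and no gaps; this is a finite case analysis over the six sign patterns listed in Remark~\ref{rem:zero-line-direct} (and their rotations, reduced via Remarks~\ref{rem:sign-pattern} and~\ref{rem:sign-change}). Then, within each sector, the covering from Table~\ref{table:covering} is a periodic brick-wall pattern on a parallelogram-shaped region of the dP3 lattice, and one checks directly that it covers every vertex of that region exactly once. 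The subtle points are the seams: along a straight line dividing two same-sign sectors, the two coverings share one edge of the pair, so the tilings agree in a neighborhood of the line and glue into a perfect matching; along a staircase, one checks step-by-step that the two adjacent coverings interlock correctly; and along the zero line, the parity remark shows the covering is forced. I would organize this as a lemma per seam type.

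For (2), the key reduction is that a positive twistable face can only occur where two \emph{different} local coverings meet, because each of the six periodic coverings in Table~\ref{table:covering}, extended over a full parallelogram, has no twistable face at all in its interior (each $4$-gon face has exactly two matching edges but they are the ``wrong'' pair to form an alternating $4$-cycle — this is a direct check on the dP3 brick pattern). Hence twistable faces live only along the two straight seams, the two staircases, and the zero line. Along a straight seam the shared edge of the two coverings kills any alternating cycle, so no twistable face arises there. That leaves the staircases and the zero line, and here I must show every twistable face that does occur is \emph{negative}, i.e. its black-to-white edges circulate clockwise. This is where the orientation bookkeeping matters: I would fix the bipartite $2$-coloring of the dP3 lattice, record for each of the twelve table entries which of the two edges at each brick-face are ``in'', and compute the circulation direction of any would-be alternating $4$-cycle at a seam. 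The clockwise conclusion should follow uniformly from the fact that we always traverse the contour clockwise and place staircases with their first step on the \emph{positive} side.

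The main obstacle I anticipate is the staircase analysis. Unlike the straight seams, a staircase is a genuinely two-dimensional interface: the covering ``1--3, 2--6, 4--5'' on one side must mesh with a different covering on the other side across a zigzag, and near each inner and outer corner of the zigzag the local picture changes. I expect to need an explicit description of the staircase as an alternating sequence of horizontal and vertical unit steps on the lattice, together with a case split on whether a given face sits at a convex corner, a concave corner, or along a straight run of the staircase; in each case one identifies whether the face is twistable and, if so, checks its sign is negative. A secondary obstacle is the zero-line parity issue flagged in the remark: I would handle it by showing that the total displacement along the two staircases from their common endpoints forces the zero-line covering, so there is no real choice, and then verify that this forced covering again produces only negative twistable faces (or none). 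Once the staircases and zero line are dispatched, Corollary~\ref{cor:twist-condition} immediately gives minimality, completing the proof.
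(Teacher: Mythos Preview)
Your proposal is correct and follows essentially the same decomposition as the paper's proof in Section~\ref{sec:proof}: one lemma each for the sector interiors, the straight-line seams, the staircase seams, and the zero line (Lemmas~\ref{lem:interior}--\ref{lem:zeroLine}), with validity of the perfect matching verified along the way and minimality concluded via Corollary~\ref{cor:twist-condition}.

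One small correction to your interior argument: in a $4$-gon, any two matching edges are necessarily opposite and hence \emph{do} form an alternating $4$-cycle, so there is no ``wrong pair''. The actual reason no interior face is twistable is that, for each covering in Table~\ref{table:covering}, every face meets exactly \emph{one} edge of the matching; this is what the direct check reveals and is exactly the content of Lemma~\ref{lem:interior}.
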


\subsection{The zero line}\label{subsec:zero-line}

    In this section, we prove the existence of the zero line.

\subsubsection{White regions}\label{subsubsec:white-zero}

    Recall from Remark \ref{rem:sign-change} that in the white regions, the sign patterns are either $(+,-,\textcolor{red}{+},+,-,\textcolor{red}{+})$, $(\textcolor{red}{-},+,-,\textcolor{red}{-},+,-)$, or their cyclic rotations. Recall also that the sides with the red signs are those parallel to the zero line. For our convenience, we call the top right white region in Figure \ref{fig:allRegionPos} \textbf{Region 1}, where $k\geq 1; k - 1 \leq i,j$. Likewise, we call the top  middle region in Figure \ref{fig:allRegionPos} \textbf{Region 1'}, where $k\geq 1; k - 1 \leq i+j; i\leq -k$. Note that by Remark \ref{rem:sign-change}, discussion of these 2 cases is sufficient.
    
    \textbf{Region 1}. This region has sign pattern $(+,-,\textcolor{red}{+},+,-,\textcolor{red}{+})$, so the zero line is parallel to sides $c$ and $f$. Figure \ref{fig:zeroLineR1} shows the calculations for this region. Side $a$ has length $j+k$ while side $b$ has length $i+j+k$, so the difference between sides $f$ and $c$ is $|i| = i$. Side $c$ has length $i+k$, so the height of the staircase is $\left\lfloor\dfrac{|i+k|}{2}\right\rfloor = \left\lfloor\dfrac{i+k}{2}\right\rfloor$. Side $f$ has length $i+1-k$, so the height of the staircase is $\left\lfloor\dfrac{|i+1-k|}{2}\right\rfloor = \left\lfloor\dfrac{i+1-k}{2}\right\rfloor$. It remains to show that $\left\lfloor\dfrac{i+k}{2}\right\rfloor + \left\lfloor\dfrac{i+1-k}{2}\right\rfloor = i$, which is straightforward. If $i - k = 2\ell$, then
    \[ \left\lfloor\dfrac{i+k}{2}\right\rfloor + \left\lfloor\dfrac{i+1-k}{2}\right\rfloor = k+\ell + \ell = k + 2\ell = i. \]
    If $i - k = 2\ell + 1$, then
    \[ \left\lfloor\dfrac{i+k}{2}\right\rfloor + \left\lfloor\dfrac{i+1-k}{2}\right\rfloor = k+\ell + \ell + 1 = k + 2\ell + 1 = i. \]
    The arguments for other shapes whose sign patterns are cyclic rotations of $(+,-,+,+,-,+)$ are analogous, and they come down to checking one of the following three identities:

    \begin{align*}
        \left\lfloor\dfrac{i+k}{2}\right\rfloor + \left\lfloor\dfrac{i+1-k}{2}\right\rfloor &= i, \\ 
        \left\lfloor\dfrac{j+k}{2}\right\rfloor + \left\lfloor\dfrac{j+1-k}{2}\right\rfloor &= j, \\ 
        \left\lfloor\dfrac{-i-j-k}{2}\right\rfloor + \left\lfloor\dfrac{-i-j-1+k}{2}\right\rfloor &= -i-j-1.
    \end{align*}

    \begin{figure}[h!]
        \centering
        \begin{minipage}{.5\textwidth}
            \centering
            \includegraphics[scale = 0.25]{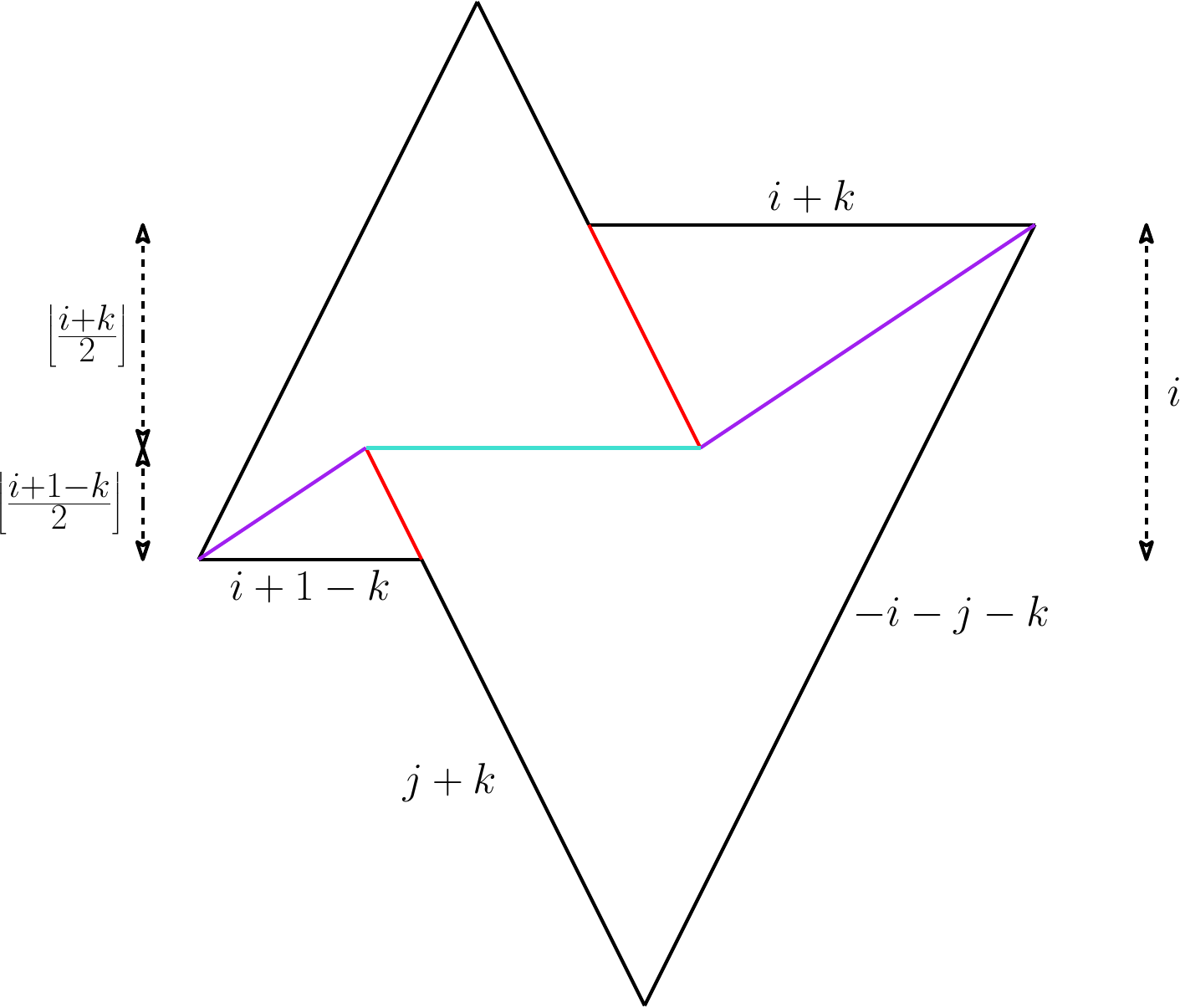}
            \caption{Region 1 zero line}
            \label{fig:zeroLineR1}
        \end{minipage}%
        \begin{minipage}{.5\textwidth}
            \centering
            \includegraphics[scale = 0.25]{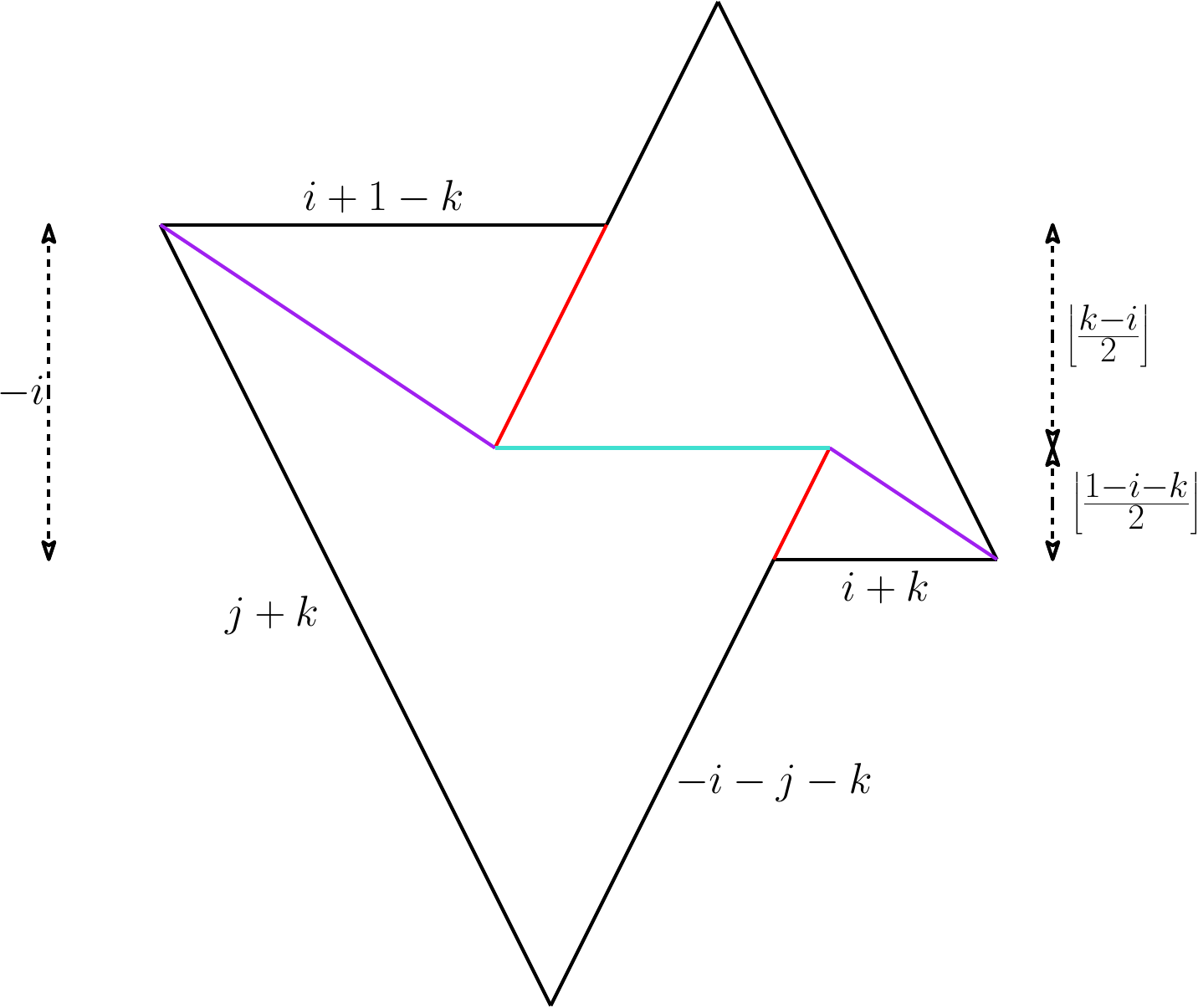}
        \caption{Region 1' zero line}
        \label{fig:zeroLineR1a}
        \end{minipage}
    \end{figure}

    \textbf{Region 1'}. This region has sign pattern $(+,-,\textcolor{red}{-},+,-,\textcolor{red}{-})$, so the zero line is parallel to sides $c$ and $f$. Figure \ref{fig:zeroLineR1a} shows the calculations for this region. The difference between sides $f$ and $c$ is $|i| = -i$. Side $c$ has length $|i+k|$, but the first step of the staircase in on side $d$, so the height of the staircase is $\left\lfloor\dfrac{|i+k|+1}{2}\right\rfloor = \left\lfloor\dfrac{1-i-k}{2}\right\rfloor$. Side $f$ has length $|i+1-k|$, so the height of the staircase is $\left\lfloor\dfrac{|i+1-k|+1}{2}\right\rfloor = \left\lfloor\dfrac{k-i}{2}\right\rfloor$. It remains to show that $\left\lfloor\dfrac{1-i-k}{2}\right\rfloor + \left\lfloor\dfrac{k-i}{2}\right\rfloor = -i$, which is also straightforward.

    The arguments for other shapes whose sign patterns are cyclic rotations of $(-,+,-,-,+,-)$ are analogous, and they come down to checking one of the following three identities:

    \begin{align*}
        \left\lfloor\dfrac{1-i-k}{2}\right\rfloor + \left\lfloor\dfrac{k-i}{2}\right\rfloor &= -i, \\ 
        \left\lfloor\dfrac{1-j-k}{2}\right\rfloor + \left\lfloor\dfrac{k-j}{2}\right\rfloor &= -j, \\ 
        \left\lfloor\dfrac{i+j+k+1}{2}\right\rfloor + \left\lfloor\dfrac{i+j-k+2}{2}\right\rfloor &= i+j+1.
    \end{align*}

\subsubsection{Pink regions}\label{subsubsec:pink-zero}

    In the pink regions, the sign patterns are $(\textcolor{red}{+},+,-,\textcolor{red}{-},+,-)$, $(-,\textcolor{red}{-},+,+,\textcolor{red}{-},+)$, and their cyclic rotations. We call the top middle pink region in Figure \ref{fig:allRegionPos} \textbf{Region 2}, where $k\geq 1; -k \leq i \leq k-1 \leq j, i+j $. Also, we call the top left pink region in Figure \ref{fig:allRegionPos} \textbf{Region 2'}, where $k\geq 1; i\leq -k \leq i+j\leq k-1\leq j$. Again, by Remark \ref{rem:sign-change}, discussion of these 2 cases is sufficient. 
    
    \textbf{Region 2}. This region has sign pattern $(+,-,\textcolor{red}{+},+,-,\textcolor{red}{-})$, so the zero line is parallel to sides $c$ and $f$. Figure \ref{fig:zeroLineR2} shows the calculations for this region. The calculations are almost the same as in region 1. The height of the staircases in this case are $\left\lfloor\dfrac{|i+k|}{2}\right\rfloor = \left\lfloor\dfrac{i+k}{2}\right\rfloor$ and $\left\lfloor\dfrac{|k-1-i|+1}{2}\right\rfloor = \left\lfloor\dfrac{k-i}{2}\right\rfloor$. We need to check $\left\lfloor\dfrac{i+k}{2}\right\rfloor - \left\lfloor\dfrac{k-i}{2}\right\rfloor = i$. This is also straightforward.

    Similar to the white regions, the arguments for other shapes whose sign patterns are cyclic rotations of $(+,+,-,-,+,-)$ comes down to checking analogous identities. This is left to the readers.

    \begin{figure}[h!]
        \centering
        \begin{minipage}[b]{.45\textwidth}
            \centering
            \includegraphics[scale = 0.25]{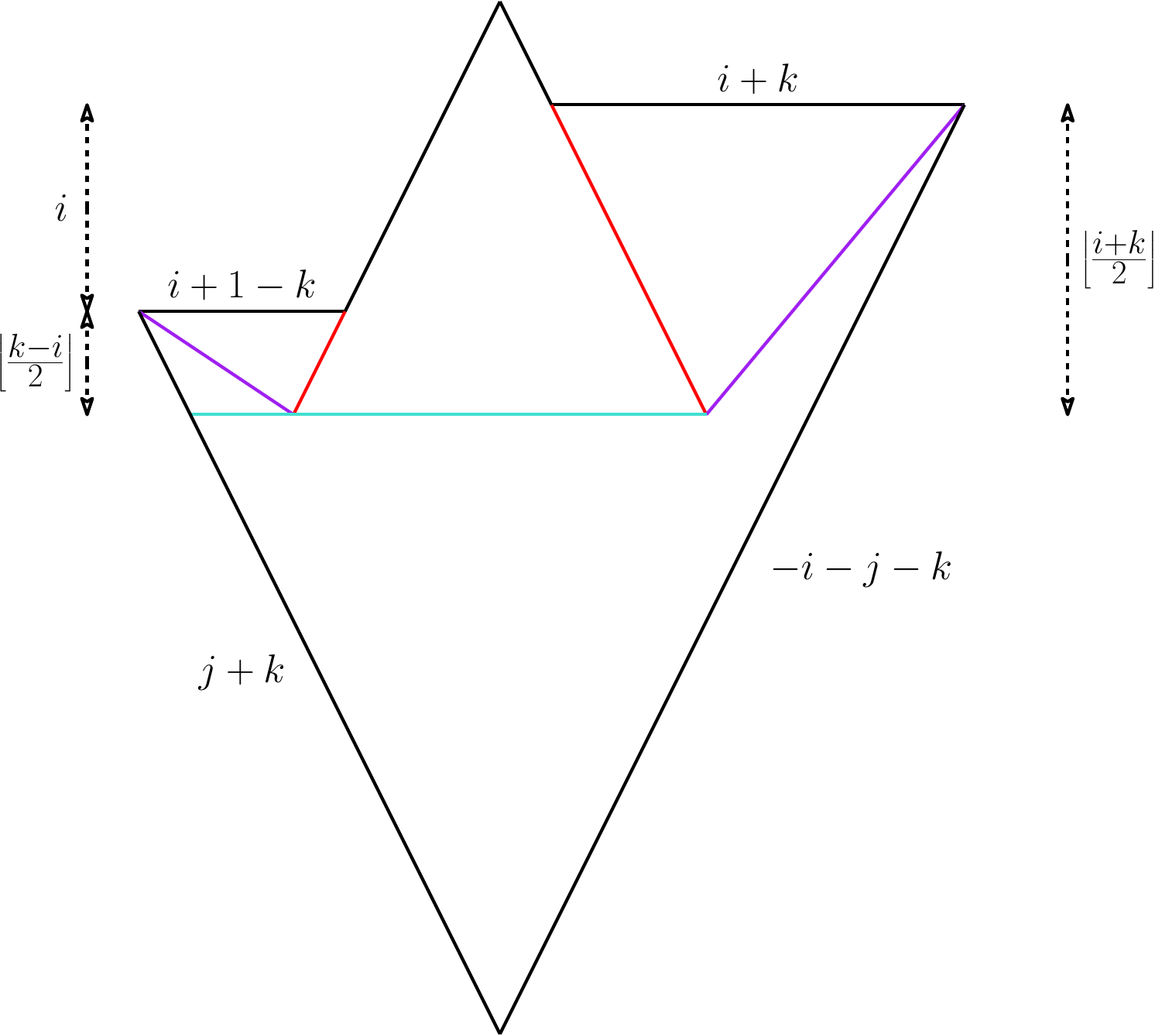}
            \caption{Region 2 zero line}
            \label{fig:zeroLineR2}
        \end{minipage}\quad\quad
        \begin{minipage}[b]{.45\textwidth}
            \centering
            \includegraphics[scale = 0.25]{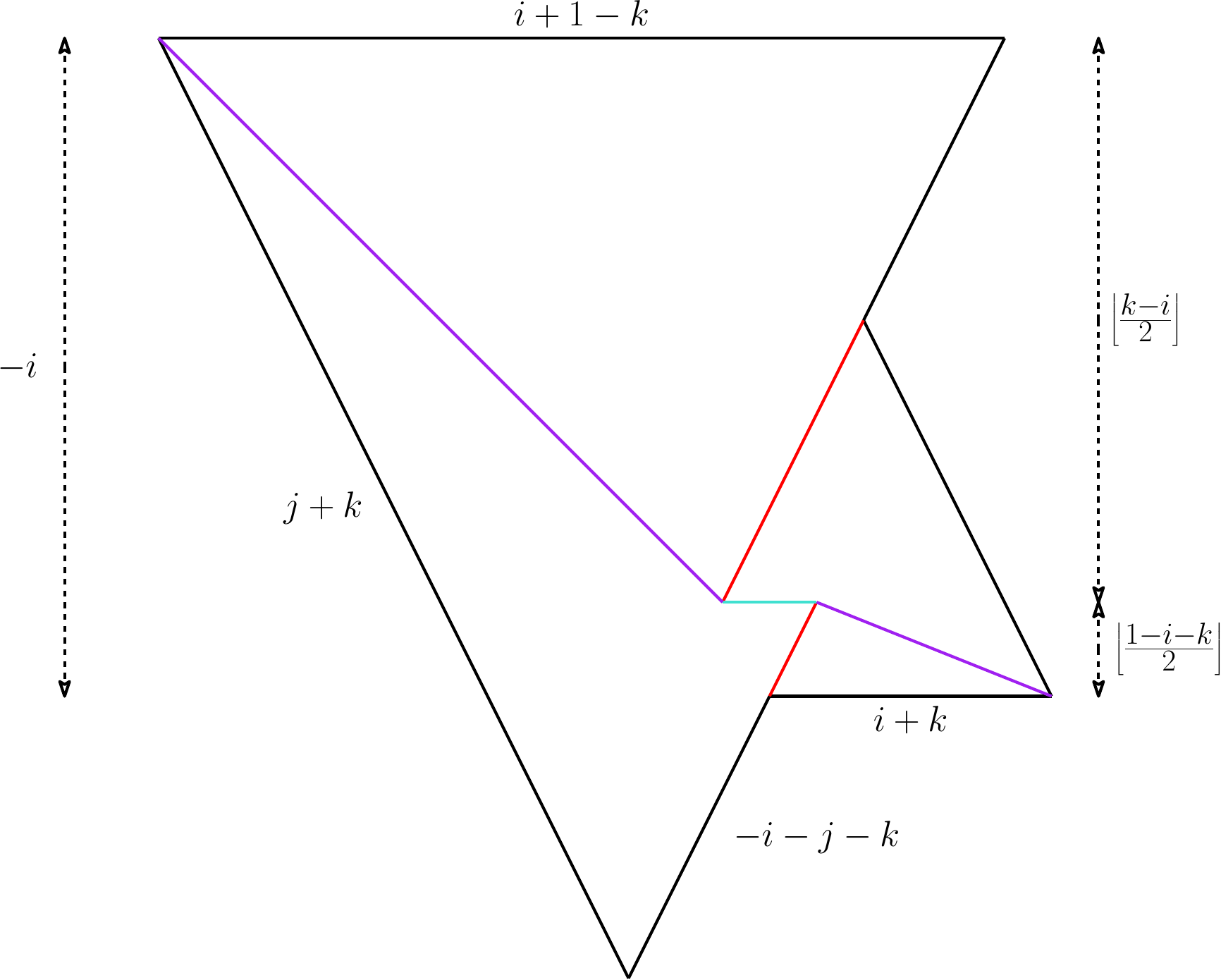}
        \caption{Region 2' zero line}
        \label{fig:zeroLineR2a}
        \end{minipage}
    \end{figure}

    \textbf{Region 2'}. This region has sign pattern $(+,-,\textcolor{red}{-},+,+,\textcolor{red}{-})$, so the zero line is parallel to sides $c$ and $f$. Figure \ref{fig:zeroLineR2a} shows the calculations for this region. Observe that the calculations for this region are exactly the same as for region 1' above. Hence, the argument for this case is the same as in Section \ref{subsubsec:white-zero}.

\subsubsection{Blue regions}\label{subsubsec:blue-zero}

    Lastly, in the blue regions, the sign patterns are $(\textcolor{red}{+},+,+,\textcolor{red}{-},+,-)$, and $(-,-,\textcolor{red}{-},+,-,\textcolor{red}{+})$, and their cyclic rotations. We call the blue region in the first quadrant in Figure \ref{fig:allRegionPos} \textbf{Region 3}, where $k\geq 1; i,j \leq k-1 \leq i+j$. Likewise, we call the top blue region in the second quadrant in Figure \ref{fig:allRegionPos} \textbf{Region 3'}, where $k\geq 1; i+j\leq k-1 \leq j; i\geq -k$. It suffices to discuss these 2 cases by Remark \ref{rem:sign-change}.
    
    \textbf{Region 3}. This region has sign pattern $(+,-,\textcolor{red}{+},+,+,\textcolor{red}{-})$, so the zero line is parallel to sides $c$ and $f$. Figure \ref{fig:zeroLineR3} shows the calculations for this region. Observe that the calculations for this region are exactly the same as for region 2 above. Hence, the argument is the same as in Section \ref{subsubsec:pink-zero}.

    \begin{figure}[h!]
        \centering
        \begin{minipage}[b]{.45\textwidth}
            \centering
            \includegraphics[scale = 0.25]{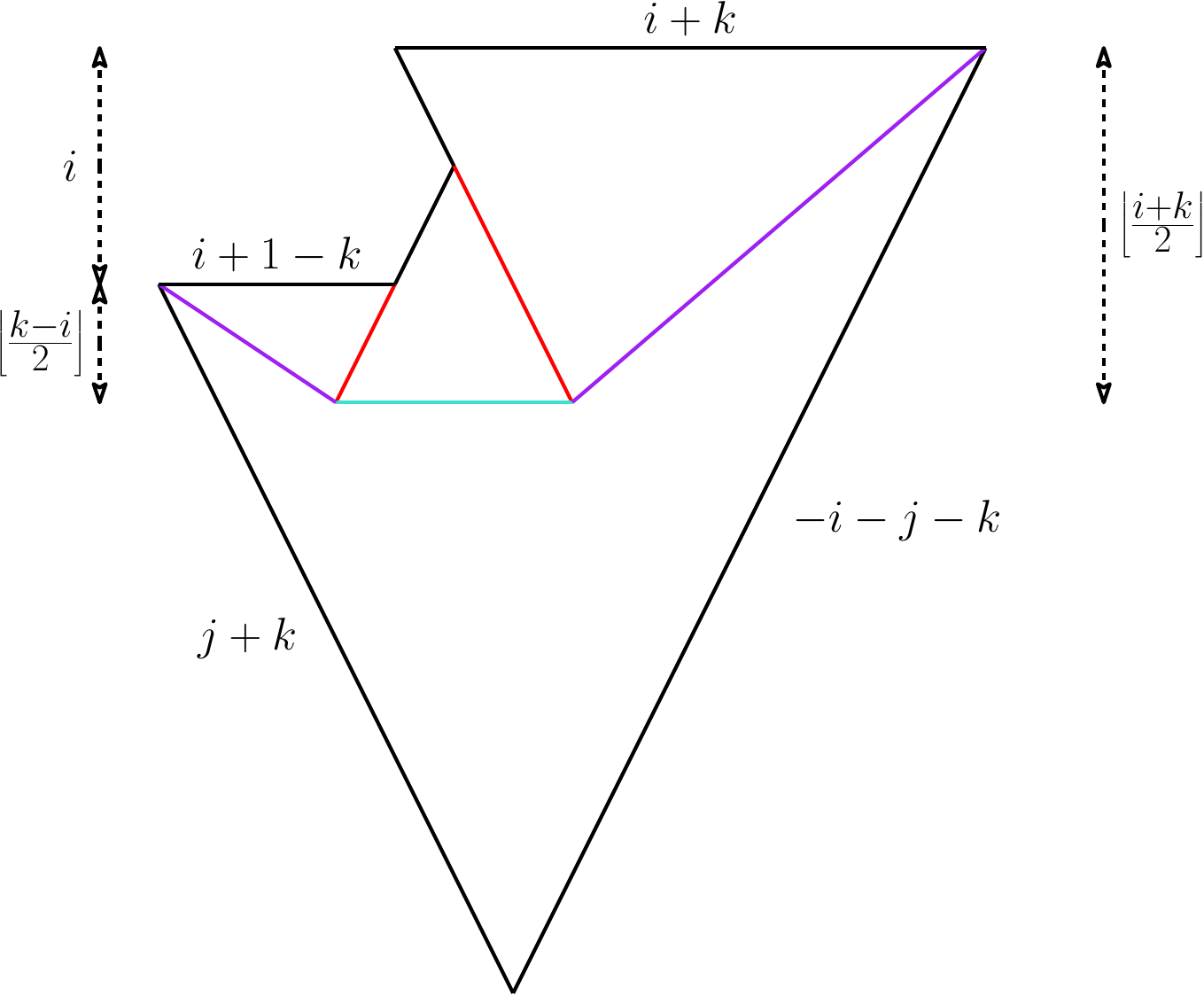}
            \caption{Region 3 zero line}
            \label{fig:zeroLineR3}
        \end{minipage}\quad\quad
        \begin{minipage}[b]{.45\textwidth}
            \centering
            \includegraphics[scale = 0.25]{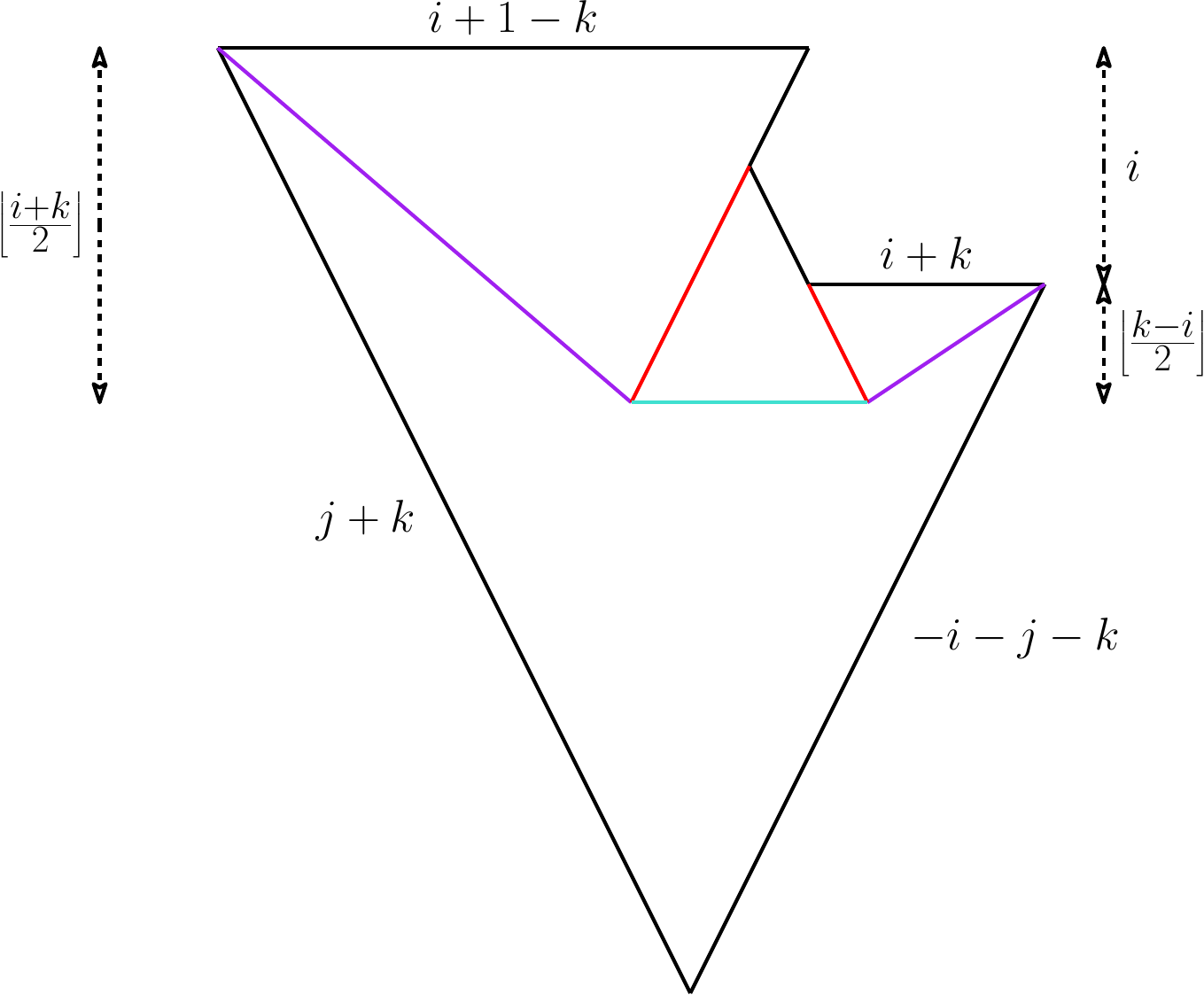}
        \caption{Region 3' zero line}
        \label{fig:zeroLineR3a}
        \end{minipage}
    \end{figure}

    \textbf{Region 3'}. This region has sign pattern $(+,-,\textcolor{red}{+},-,-,\textcolor{red}{-})$, so the zero line is parallel to sides $c$ and $f$. Figure \ref{fig:zeroLineR3a} shows the calculations for this region. Observe that the calculations for this region are exactly the same as for region 3 above. Hence, the argument is also the same as in Section \ref{subsubsec:pink-zero}.

\section{Proof of main theorem}\label{sec:proof}

\subsection{Proof sketch}\label{subsec:proof-sketch}

    In this section, we give the proof of Theorem \ref{thm:min-match}. The proof consists of the following points.

    \begin{enumerate}
        \item There is no twistable face in the interior of each sector.
        \item There is no twistable face along the straight-line borders.
        \item All twistable faces along the staircase borders are negative.
        \item All twistable faces along the zero line are negative.
    \end{enumerate}

    Along the way, we will also show that the proposed construction gives a valid perfect matching in which every vertex is covered exactly once. Then, Theorem \ref{thm:min-match} follows by Corollary \ref{cor:twist-condition}.

\subsection{Proof}\label{subsec:proof}

    \begin{lemma}\label{lem:interior}
        In the interior of each sector, there is no twistable face, and every vertex is covered exactly once.
    \end{lemma}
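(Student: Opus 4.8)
The plan is to analyze the structure inside a single sector, where by construction every face is covered by one of the six "universal coverings" listed in Table~\ref{table:covering}. First I would set up coordinates on the dP3 lattice so that a sector incident to a fixed side (say side $c$, positive) is a union of unit cells of the same type, and I would identify exactly which faces of the dP3 lattice get labels $1,\ldots,6$ and how the chosen covering (e.g. $1-3,2-6,4-5$ for side $c$ positive) selects one edge per fundamental domain. The key observation to establish is that a universal covering, restricted to the interior of a sector, is \emph{periodic}: it looks the same in every fundamental domain, so the local picture around any interior face is one of only finitely many configurations. I would then check, case by case over the six coverings, that in each periodic local picture no bounded face has exactly $s$ of its $2s$ edges in the matching — equivalently (since all faces here are $4$-gons) no interior face has exactly two matched edges forming an alternating $4$-cycle. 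Because the covering is a "brick-wall"–type pattern in which the chosen edges are spread out so that each face sees at most one matched edge (or a non-alternating pair), this is a short finite verification once the six pictures are drawn.

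Second, I would verify that the covering is a genuine perfect matching of the interior: every vertex of the dP3 lattice lying strictly inside the sector is covered exactly once. Since the covering is one edge per fundamental domain and the dP3 fundamental domain has six vertices and six faces, I would check that the three chosen edges in Table~\ref{table:covering} (read across the three fundamental sub-cells, or however the indexing works out) partition the six vertices into three pairs; this is again a finite check, one per row of the table. The periodicity then propagates this to the whole interior. The boundary vertices of the sector are deliberately excluded from the statement (they are handled when the sectors are glued along straight lines, staircases, and the zero line), so I do not need to worry about them here.

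The main obstacle I anticipate is purely bookkeeping: pinning down precisely which face of the dP3 lattice receives which of the labels $1,\ldots,6$ relative to the contour's orientation, and making sure the six coverings in Table~\ref{table:covering} are read consistently with that labeling — a sign or rotation error here would make the "no twistable face" check come out wrong. I would resolve this by fixing one explicit example (e.g. re-deriving the covering for side $c$ positive directly from the known minimal matching of a small Aztec Dragon, cf.\ Theorem~\ref{thm:LMCastle} and the $y$-free term of $\hat z$), and then using the lattice's translational symmetry to transport the argument. Once the labeling is fixed, each of the six cases reduces to inspecting a single periodic tile, so the remaining work is routine. I would also remark that the six rows of Table~\ref{table:covering} are related by the dihedral symmetry of the dP3 quiver, so in fact only one or two cases need to be checked in detail and the rest follow by symmetry.
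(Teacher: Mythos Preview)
Your proposal is correct and takes essentially the same approach as the paper: both arguments reduce to a direct inspection of the periodic universal coverings in Table~\ref{table:covering}. The paper's proof is a single sentence --- it simply observes that in each of the six coverings every face is incident to \emph{exactly one} matched edge (the three edge-types in each row partition the face labels $\{1,\dots,6\}$ into three disjoint pairs), from which both claims follow immediately: a $4$-gon with one matched edge cannot be twistable, and the periodic pattern covers every interior vertex once. Your coordinate setup, case analysis, and dihedral-symmetry reduction are all sound but more elaborate than necessary; the bookkeeping obstacle you anticipate dissolves once you notice the disjoint-pair structure of each row.
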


    \begin{proof}
        This lemma is immediate from examining the universal coverings in Table \ref{table:covering}.  In particular, every face is incident to exactly one such edge of the matching.
    \end{proof}

    \begin{lemma}\label{lem:straight}
        Along the straight line borders, there is no twistable face, and every vertex is covered exactly once.
    \end{lemma}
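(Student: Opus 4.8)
The plan is to analyze the local picture along each straight-line border separately, relying on the observation (already noted after Table~\ref{table:covering}) that when two consecutive sides have the same sign, their two universal coverings share a common edge among the three listed pairs of faces. Concretely, a straight line is drawn only when we pass from a positive side to a positive side (drawing it in the direction of the second side) or from a negative side to a negative side (in the direction of the first side). In either case, the line separates a sector governed by the covering of one side from a sector governed by the covering of the adjacent same-sign side. First I would enumerate, using Remark~\ref{rem:sign-pattern-4} and Remark~\ref{rem:sign-change}, exactly which ordered pairs of same-sign sides can actually occur in a non-self-intersecting Aztec Castle; by the sign-pattern classification there are only finitely many, and for $k\ge 1$ it suffices to treat one representative of each rotation orbit, the $k\le 0$ cases following by the cyclic shift of Remark~\ref{rem:sign-pattern}.

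For each such pair, I would then read off from Table~\ref{table:covering} the two triples of matching edges and identify the shared edge; for example, from side $c$ positive to side $d$ positive the coverings are $\{1\text{-}3,2\text{-}6,4\text{-}5\}$ and $\{1\text{-}6,2\text{-}3,4\text{-}5\}$, sharing the edge $4\text{-}5$. The key geometric claim is that the zig-zag strip of faces straddling the straight line is precisely the strip along which the two coverings agree, so that the matching restricted to a neighborhood of the line is literally a single periodic pattern with no seam. I would make this precise by setting up coordinates on the dP3 lattice along the straight-line direction and checking that each face meeting the line is incident to exactly one matching edge, and that this edge is the shared edge of the two neighboring coverings (or, for faces slightly off the line, one of the two remaining edges of the appropriate side's covering). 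Since a face is twistable only if it has exactly half its edges in the matching, and here every such face along the line has exactly one of its four edges in the matching, none of them is twistable; the same incidence count shows every vertex on or adjacent to the line is covered exactly once, so the matching glues correctly across the border.

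The main obstacle I anticipate is not conceptual but bookkeeping: one must verify the "shared edge" and "single incidence" claims uniformly across all same-sign corner types and across the three region colors, including the subtlety of which of the two coverings applies to the faces exactly on the line versus just off it, and handling the endpoints of the straight line where it meets a staircase or the zero line (those endpoint faces should be deferred to the staircase/zero-line lemmas). A clean way to organize this is to prove a single local lemma: if two overlapping universal coverings from Table~\ref{table:covering} share an edge $e$, then laying them down on either side of the straight line in the shared direction produces a matching in which every face along the line has exactly one matched edge. Granting that local lemma, the global statement follows by applying it once per straight line, of which there are exactly two by Step~1 of the construction, and the "every vertex covered once" conclusion is then the same incidence count read vertex-wise. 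I would also double-check, using Remark~\ref{rem:zero-line-direct}, that the direction assigned to the straight line in Step~1 is consistent with the direction in which the two coverings actually agree, so that the strip of agreement is the correct one.
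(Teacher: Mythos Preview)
Your plan is essentially the paper's: a finite case-check over the same-sign corner types, leaning on the shared-edge observation already flagged after Table~\ref{table:covering}. The paper packages the local check a bit more sharply than your ``faces along the line carry the shared edge'' formulation: it observes that no edge lying \emph{on} the straight line is in the matching, with the black vertices on the line matched into the first side's sector and the white vertices into the second side's sector (e.g.\ for positive $a$ to positive $b$, black via $1\text{--}4$, white via $3\text{--}5$). That black/white split makes ``every vertex covered once'' immediate and is what you would end up writing down once you unpack your shared-edge claim, which as stated is not literally true for every face touching the line; you can also drop the sign-pattern enumeration and coordinate setup, since one illustrative pair plus the evident symmetry suffices.
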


    \begin{proof}
        Recall from the construction that there are two scenarios that create a straight line border:
        \begin{enumerate}
            \item we move from a positive side to a positive side, and draw a straight line in the direction of the second side; or
            \item we move from a negative side to a negative side, and draw a straight line in the direction of the first side.
        \end{enumerate}
        Let us show the argument for the first case. The second case follows analogously. In the first case, the matching along the straight line border is as follows:

        \begin{itemize}
            \item The black vertices are covered by the covering in the first side's sector.
            \item The white vertices are covered by the covering in the second side's sector.
            \item No edge along the border is covered, and hence there is no twistable face.
        \end{itemize}
        
        Figure \ref{fig:posAposB} shows an example when we move from a positive side $a$ to a positive side $b$, the straight line is in the direction of side $b$. The black vertices on the border are covered by the edges $1-4$ in side $a$'s sector. The white vertices on the border are covered by the edges $3-5$ in side $b$'s sector. There is no edges covered along the border, and there is no twistable face. We encourage the readers to check this observation for other pairs of consecutive positive sides.

        \begin{figure}[h!]
            \centering
            \includegraphics[scale = 0.7]{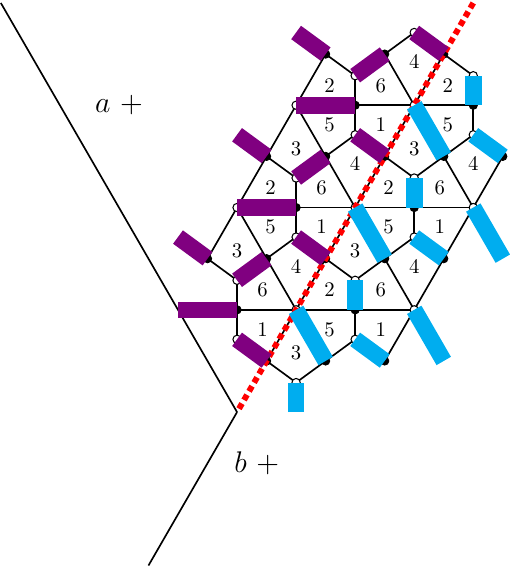}
            \caption{Straight line border (colored \textcolor{red}{red}) between positive $a$ and positive $b$}
            \label{fig:posAposB}
        \end{figure}
    \end{proof}
    \begin{lemma}\label{lem:staircase}
        Along the staircase borders, all twistable faces are negative, and every vertex is covered exactly once.
    \end{lemma}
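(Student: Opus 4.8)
Along the staircase borders, all twistable faces are negative, and every vertex is covered exactly once.

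Let me think about this carefully.

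First, the setup. A staircase border is drawn when we move (clockwise) from a negative side to a positive side, with the first step lying on the positive side. The staircase separates two sectors: the sector incident to the negative side (on one side of the staircase) and the sector incident to the positive side (on the other side). Each sector is covered by a "universal covering" from Table~\ref{table:covering}, which is a periodic matching pattern on the dP3 lattice (a translation-invariant choice of which of three perfect-matching-classes of the hexagonal faces to use — it picks, for each fundamental domain with faces labeled 1–6, three edges like $1-4, 2-5, 3-6$).

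The staircase is a diagonal sequence of "steps." Along it, we have a local picture: on one side of the step is the negative-side covering, on the other is the positive-side covering. The border cuts through some faces of the lattice, and the faces straddling the staircase may or may not be twistable.

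So the plan is:

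1. **Identify the two coverings on either side of each staircase, and the local geometry.** By Remark~\ref{rem:sign-change} / Remark~\ref{rem:zero-line-direct}, the sign pattern determines which pair of sides is "negative-to-positive" — and by the cyclic symmetry (Remark~\ref{rem:sign-pattern}, Remark~\ref{rem:sign-change}) it suffices to handle a bounded number of cases. Actually, since a staircase is a local object determined only by which side is negative and which is positive (and these are *consecutive* sides of the contour), we need only check the transitions that actually occur: e.g. $(b^-, c^+)$, $(e^-, f^+)$, and — for the pink/blue regions — also things like $(d^-, e^+)$, $(a^-, b^+)$, etc. Using Table~\ref{table:covering}, for each such consecutive pair $(\text{side}_i^-, \text{side}_{i+1}^+)$ we read off the two universal coverings.

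2. **Show every vertex is covered exactly once along the staircase.** This is a local check: walk down the staircase one step at a time; on each step, the black vertices near the border are matched by the covering of one sector, the white vertices by the covering of the other sector (mirroring the argument of Lemma~\ref{lem:straight}), and the step itself contributes one edge of the matching connecting a black and white vertex right at the corner. One shows that there is no vertex left uncovered and none covered twice — this is a finite verification on a single "generic step" plus the two endpoints of the staircase (where it meets a corner of the contour on the positive-side end and meets a straight line or the zero line on the other end).

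3. **Show every twistable face along the staircase is negative.** A face is twistable iff it is a hexagon with exactly 3 of its 6 edges in the matching (all dP3 faces here are hexagons after removing dangling edges — wait, the paper says all faces are 4-gons; re-reading: "since all faces are 4-gons" — OK so after the reduction the faces of $\hat{\mathcal C}$ are quadrilaterals, so a twistable face has 2 of its 4 edges in the matching forming an alternating 2-path... no: a $2s$-gon is twistable if it has $s$ edges in the matching. For $s=2$, a 4-gon with 2 opposite edges in the matching). So along the staircase, the only candidate twistable faces are the ones the staircase "cuts through" at each step. For each step, examine the 4-gon there: show it has exactly two of its edges in the matching (one from the negative-side covering, one from the positive-side covering — these being the two edges "parallel" to the step direction), and then check the orientation: directing edges from black to white, they circle clockwise, so the face is negative per the definition in Section~\ref{subsubsec:twist-down}. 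Again this reduces to a single generic step plus endpoints, for each of the finitely many side-transition types.

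**The main obstacle.** The real work — and where I'd expect to spend effort — is step 3, specifically pinning down the orientation (positive vs. negative) of the cut faces *consistently* across all the side-transition types. The "every vertex covered once" part (step 2) and "it's a valid matching" part are bookkeeping, essentially identical in flavor to Lemma~\ref{lem:straight}. But the sign of a twistable face depends on the black/white coloring of the lattice relative to the orientation of the staircase, and the staircase orientation flips depending on which corner of the contour we're at. One has to be careful that in *every* case that genuinely arises (and only those — Remark~\ref{rem:sign-pattern-4} guarantees exactly two staircases), the cut face comes out negative and not positive. I'd organize this by: (a) reducing to the $k \geq 1$ case via Remark~\ref{rem:sign-pattern}; (b) for the $k\ge1$ case, enumerating the staircase transitions appearing in the white/pink/blue sign patterns listed in Remark~\ref{rem:zero-line-direct}; (c) for each, drawing the local picture of one step with the two overlaid coverings from Table~\ref{table:covering} and reading off both the matching edges and the induced black-to-white orientation around the cut 4-gon. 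A secondary subtlety is the behavior at the two *endpoints* of each staircase: the top end (on the positive side, at a corner of the contour) and the bottom end (where it meets either a straight line, when moving into a same-sign pair, or the zero line). At these endpoints the local geometry differs from a generic step, so each must be checked separately — but, crucially, these endpoint faces are shared with the analysis of Lemmas~\ref{lem:straight} and the forthcoming zero-line lemma, so consistency there is what makes the sectors glue into a genuine perfect matching; I would make sure the staircase-endpoint analysis matches the straight-line-endpoint and corner analysis.
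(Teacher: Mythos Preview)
Your proposal is correct and follows essentially the same approach as the paper: reduce to a local analysis at a generic step of the staircase, verify vertex-coverage and identify the twistable face there, check its orientation, and then appeal to cyclic symmetry (Remarks~\ref{rem:sign-pattern}--\ref{rem:sign-change}) to cover all negative-to-positive transitions. The paper organizes this local analysis slightly more crisply by naming the two kinds of edges in a step (\emph{base steps} parallel to the positive side, \emph{side steps} parallel to the negative side) and distinguishing two kinds of white vertices on each side step (``concave-up'' versus ``concave-down''); with that vocabulary, it states directly that the black vertices on base steps and the concave-up white vertices are covered by the two sector coverings, while the remaining black/concave-down pair on each side step is matched by the positive side's covering, and \emph{that} edge sits in the unique twistable (negative) face on the negative side---then verifies this in the representative case $(b^-, c^+)$ and leaves the remaining transitions to the reader. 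Your plan arrives at the same verification; adopting the base/side-step language would sharpen your step~2 and make it unnecessary to treat the staircase endpoints separately here, since (as you suspected) those are absorbed into the straight-line and zero-line lemmas.
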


    \begin{proof}
        Recall from the construction that we draw a staircase border we move from a negative side to a positive side, with the first step lying on the positive side. We call a step on the staircase a \textit{base step} (resp. \textit{side step}) if it is parallel to the positive (resp. negative) side. There are two white vertices and one black vertex on each side step. We call a white vertex on a side step a \textit{white concave-up vertex} (colored red) if the arrow from it to the black vertex on the same step points in the negative direction of the negative side. We call the other white vertex the \textit{white concave-down vertex} (colored blue). 
        The matching along the staircase border is as follows:

        \begin{itemize}
            \item On the base steps, the black vertices are covered by the covering in the positive side's sector.
            \item On the side steps, the white concave-up vertices are covered by the covering in the negative side's sector; and
            \item the black vertices and the white concave-down vertices are matched following the positive side's sector. The faces containing these edges on the negative side's sector are the negative twistable faces.
        \end{itemize}

        Figure \ref{fig:negBposC} shows an example when we move from a negative side $b$ to a positive side $c$. The black vertices on the base steps are covered by the edges $2-6$ in side $c$'s sector. The white concave-up vertices on the side steps are covered by the edges $2-5$ in side $b$'s sector. The remaining vertices on the side steps are matched by the edges $1-3$. The faces $3$ containing these edges on side $b$'s sector are the negative twistable faces. We encourage the readers to check this observation for other pairs of consecutive positive sides.

    \begin{figure}[h!]
    \centering
        \begin{subfigure}[b]{0.45\textwidth}
            \centering
            \includegraphics[width = \textwidth]{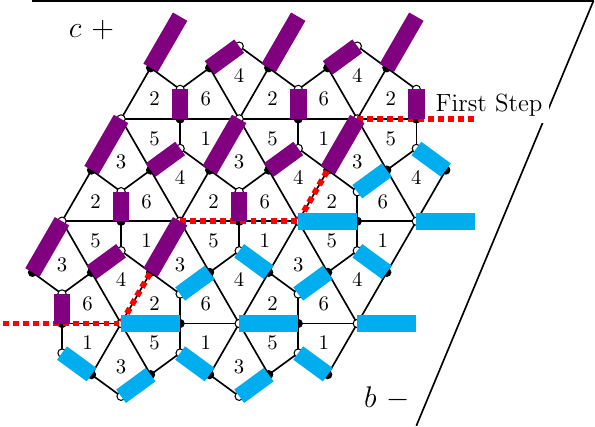}
            \caption{The matching}
            \label{fig:negBposC}
        \end{subfigure}
     \quad
        \begin{subfigure}[b]{0.45\textwidth}
            \centering
            \includegraphics[width = \textwidth]{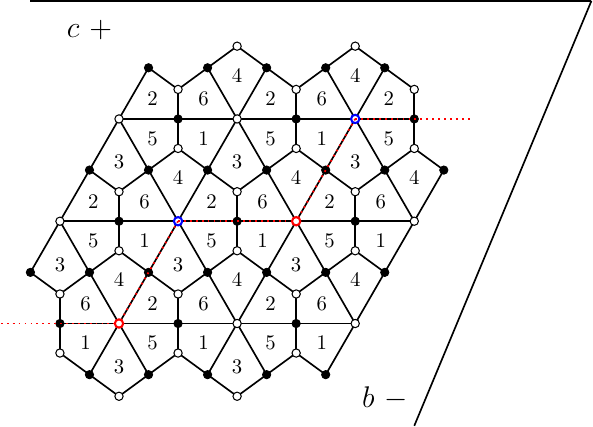}
         \caption{White concave up and down vertices}
            \label{subfig:whitevertex}
        \end{subfigure}
        \caption{Staircase border between $b\leq 0$ and $c\geq 0$}
        \label{fig:StaircaseExample}
    \end{figure}
    \end{proof}

    \begin{lemma}\label{lem:zeroLine}
        Along the zero line, all twistable faces are negative, and every vertex is covered exactly once.
    \end{lemma}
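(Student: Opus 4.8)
The plan is to mirror the structure of the proofs of Lemmas \ref{lem:straight} and \ref{lem:staircase}, but now across the zero line, which is the one border that separates two sectors each incident to only one side of the contour (the two ``red-signed'' opposing sides of Remark \ref{rem:zero-line-direct}). First I would fix, without loss of generality via Remark \ref{rem:sign-change}, a representative sign pattern so that the zero line is parallel to sides $c$ and $f$; then the sector on one side of the zero line is governed by the universal covering of side $c$ (or $f$) and the sector on the other side by that of side $f$ (or $c$). The key combinatorial input is that, by the computations in Section \ref{subsec:zero-line}, the two staircases land on the lattice in such a way that the zero line is a genuine straight lattice line joining their endpoints; in particular the zero line has a well-defined parity (the length of each staircase modulo $2$), and this parity is exactly what disambiguates the covering choice flagged in the Remark after Table \ref{table:covering}.

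The core of the argument is a local analysis along the zero line analogous to the staircase case. I would describe, vertex by vertex along the line, how the black vertices are covered by the covering of one incident side's sector and the white vertices by the other, exactly as in Lemma \ref{lem:straight} when the two universal coverings share a common edge — and here I expect they do share a common edge precisely because of the sign pattern constraints (e.g.\ for pattern $(+,-,\hat{+},+,-,\hat{+})$ the coverings of $c$-positive and $f$-positive, namely $1\text{-}3,2\text{-}6,4\text{-}5$ and $1\text{-}5,2\text{-}4,3\text{-}6$, must be compared; one checks which edges are forced along the line). Wherever an edge of the matching does lie along the zero line, I would then check that the face carrying it is a negative twistable face by the black-to-white orientation criterion of Section \ref{subsubsec:twist-down}: the edges of that face, directed from black to white, circle it clockwise. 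This is where the parity of the staircases is used — it guarantees the matched edges alternate along the line so that every face straddling the zero line is covered consistently (hence either untwistable, or twistable but negative), and that no vertex on the line is left uncovered or doubly covered.

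The main obstacle I anticipate is the case analysis: unlike the straight-line and staircase borders, the zero line separates sectors indexed by two \emph{opposite} sides of the hexagonal contour, and the six cyclic-rotation classes of sign patterns (white, pink, blue of Remark \ref{rem:sign-change}) give several genuinely different pairs of universal coverings to compare in Table \ref{table:covering}; moreover the two parity cases (even vs.\ odd staircase length) must be handled separately, since they select different coverings on the line. I would reduce the workload by noting, as the paper already does for the zero-line \emph{existence} proof, that many regions reduce to one another (Region 2' to Region 1', Region 3 to Region 2, Region 3' to Region 3), so that only two or three genuinely distinct local pictures need to be drawn; for each I would exhibit the explicit matching near the zero line and verify the black-to-white orientation is clockwise on every face meeting the line. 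Once negativity of all such faces and the covering-exactly-once property are established, combined with Lemmas \ref{lem:interior}, \ref{lem:straight}, \ref{lem:staircase} this shows the construction is a perfect matching with no positive twistable face, so Theorem \ref{thm:min-match} follows from Corollary \ref{cor:twist-condition}.
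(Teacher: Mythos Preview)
Your proposal has a concrete geometric error at its foundation: you misidentify which sectors the zero line separates. You write that the zero line borders ``two sectors each incident to only one side of the contour (the two `red-signed' opposing sides),'' i.e.\ the $c$-sector and the $f$-sector in the representative pattern $(+,-,\hat{+},+,-,\hat{+})$. In fact the opposite holds: the zero line separates the two sectors that are each incident to \emph{two} sides of the contour. In Region~1 these are the $(a,b)$-sector and the $(d,e)$-sector, whose universal coverings (Table~\ref{table:covering}) are those of negative $b$ and negative $e$, namely $\{1\text{-}4,\,2\text{-}5,\,3\text{-}6\}$ and $\{1\text{-}6,\,2\text{-}3,\,4\text{-}5\}$. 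The single-side sectors (for $c$ and for $f$) touch the zero line only at its endpoints $P_1,P_2$: the staircase from the $b/c$ corner and the straight line from the $c/d$ corner meet at $P_1$ and enclose the $c$-sector off to one side, and symmetrically for $f$ at $P_2$.

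This matters because the pair of coverings you propose to compare ($c$-positive versus $f$-positive) is the wrong pair, and more importantly the correct pair (negative $b$ versus negative $e$) share \emph{no} common edge, so the ``smooth transition'' mechanism you borrow from Lemma~\ref{lem:straight} does not apply along the zero line. The paper's argument is different in kind: it uses the parity computations of Section~\ref{subsec:zero-line} to show that of the two endpoints of the zero line exactly one is already covered by the staircase rule of Lemma~\ref{lem:staircase} (in Region~1 the two staircases have lengths of opposite parity with base steps on parallel sides; in Regions~2 and~3 they have the same parity but base steps on non-parallel sides, with the same effect). This leaves an even number of uncovered vertices on the line, which are matched in pairs along the line using whichever of the two adjacent coverings (negative $b$ or negative $e$) already covered that endpoint; the twistable faces so produced are then checked to be negative. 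Your parity intuition is in the right spirit, but it selects a \emph{single} covering for the whole zero line, not an alternation between two.
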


    \begin{proof}
        Similar to Section \ref{subsec:zero-line}, we only need to show the proof for three representative regions:
        \begin{enumerate}
            \item $k\geq 1; k - 1 \leq i,j$;
            \item $k\geq 1; -k \leq i \leq k-1 \leq j, i+j$; and
            \item $k\geq 1; i,j \leq k-1 \leq i+j$.
        \end{enumerate}
        Note that in all three regions, the two sectors incident to the zero line are incident to a negative side $b$ and a negative side $e$.
        
        In region 1, as shown in Section \ref{subsec:zero-line}, the two staircases have lengths of different parity and base steps on parallel sides ($c$ and $f$), so one of them ends with a side step and the other end with a base step. 

        By the matching rule along the staircase borders in Lemma \ref{lem:staircase}, of the two endpoints of the zero line, only one is covered. Thus, there are an even number of uncovered vertices on the zero line. If the covered endpoint is covered by the covering of side $b$ (resp. side $e$), then we can cover the rest of the zero line by the same covering as in Figure \ref{subfig:negBnegEcase1} (resp. Figure \ref{subfig:negBnegEcase2}). We refer the readers to Figure \ref{fig:432Example} and \ref{fig:532Example} for examples of these coverings. In both cases, the twistable faces containing these edges on the zero line are negative.

        \begin{figure}[h!]
        \centering
            \begin{subfigure}[b]{0.4\textwidth}
                \centering
                \includegraphics[width = \textwidth]{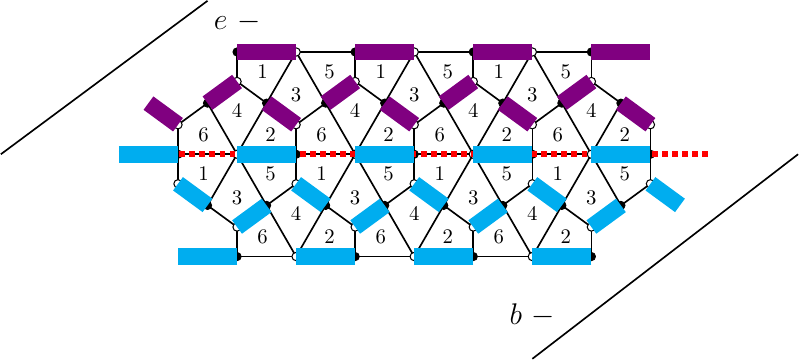}
                \caption{Zero line's matching follows negative $b$}
                \label{subfig:negBnegEcase1}
            \end{subfigure}
         \quad
            \begin{subfigure}[b]{0.4\textwidth}
                \centering
                \includegraphics[width = \textwidth]{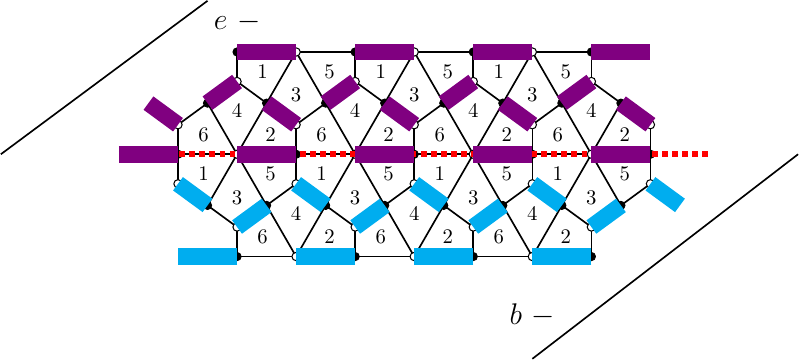}
                \caption{Zero line's matching follows negative $e$}
                \label{subfig:negBnegEcase2}
            \end{subfigure}
            \caption{Zero line border between negative $b$ and negative $e$}
            \label{fig:negBnegE}
        \end{figure}

        In regions 2 and 3, the two staircases have lengths of the same parity but base steps on non-parallel sides ($c$ and $a$), so the same is true: one of them ends with a side step and the other ends with a base step. Thus, the arguments follow analogously. We refer the readers to Figure \ref{fig:043Example} and \ref{fig:044Example} for examples of the coverings.
    \end{proof}

\section*{Acknowledgements}
    
    This project was partially supported by the RTG grant NSF/DMS-1745638 and the FRG grant NSF/DMS-1854162. It was supervised as part of the University of Minnesota School of Mathematics Summer 2022 REU program. The authors would like to thank Carolyn Stephen and Sylvester Zhang for their continuous support throughout the project. We would also like to thank David Speyer for helpful discussions.

\bibliography{bibliography}
\bibliographystyle{alpha}

\newpage
	
\appendix

\section{Minimal matching examples}\label{append:examples}

    Here we show examples of minimal matchings of Aztec Castles of different shapes, corresponding to different regions of $\mathbb{Z}^3$. Here the red lines correspond to the straight lines and staircases, and the blue lines correspond to the zero lines.

    \textbf{Region 1:} $k\geq 1; k - 1 < i,j$. This is the top right white region in Figure \ref{fig:allRegionPos}.

    \begin{figure}[h!]
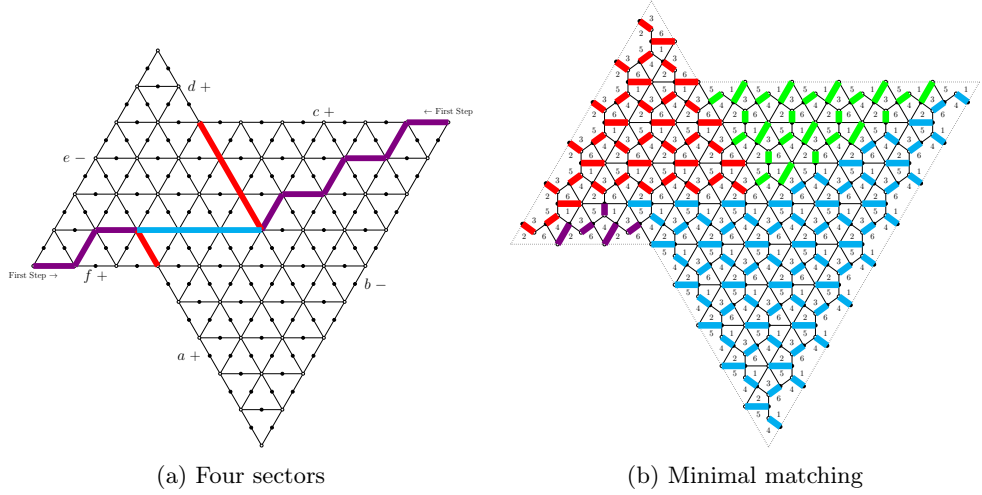

    \centering
        \begin{subfigure}[b]{0.4\textwidth}
            \centering
            \includegraphics[width = \textwidth]{fig/MinMatchEx/432Divide.pdf}
            \caption{Four sectors}
            \label{subfig:432Divide}
        \end{subfigure}
     \quad
        \begin{subfigure}[b]{0.4\textwidth}
            \centering
            \includegraphics[width = \textwidth]{fig/MinMatchEx/432Matching.pdf}
            \caption{Minimal matching}
            \label{subfig:432Matching}
        \end{subfigure}
        \caption{Four sectors and minimal matching of Castle $\mathcal{C}_{4,3,2}$ }
        \label{fig:432Example}
    \end{figure}

    \begin{figure}[h!]
    \centering
        \begin{subfigure}[b]{0.4\textwidth}
            \centering
            \includegraphics[width = \textwidth]{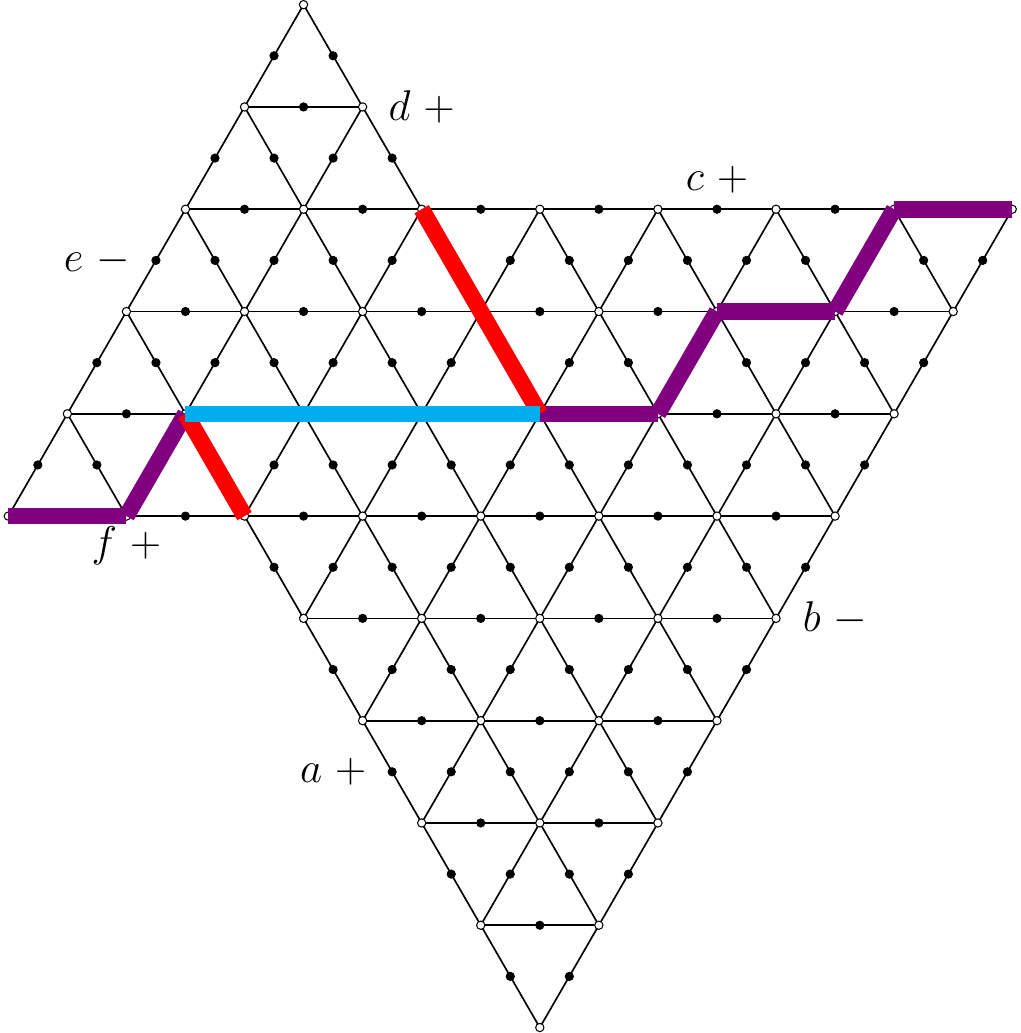}
            \caption{Four sectors}
            \label{subfig:532Divide}
        \end{subfigure}
     \quad
        \begin{subfigure}[b]{0.4\textwidth}
            \centering
            \includegraphics[width = \textwidth]{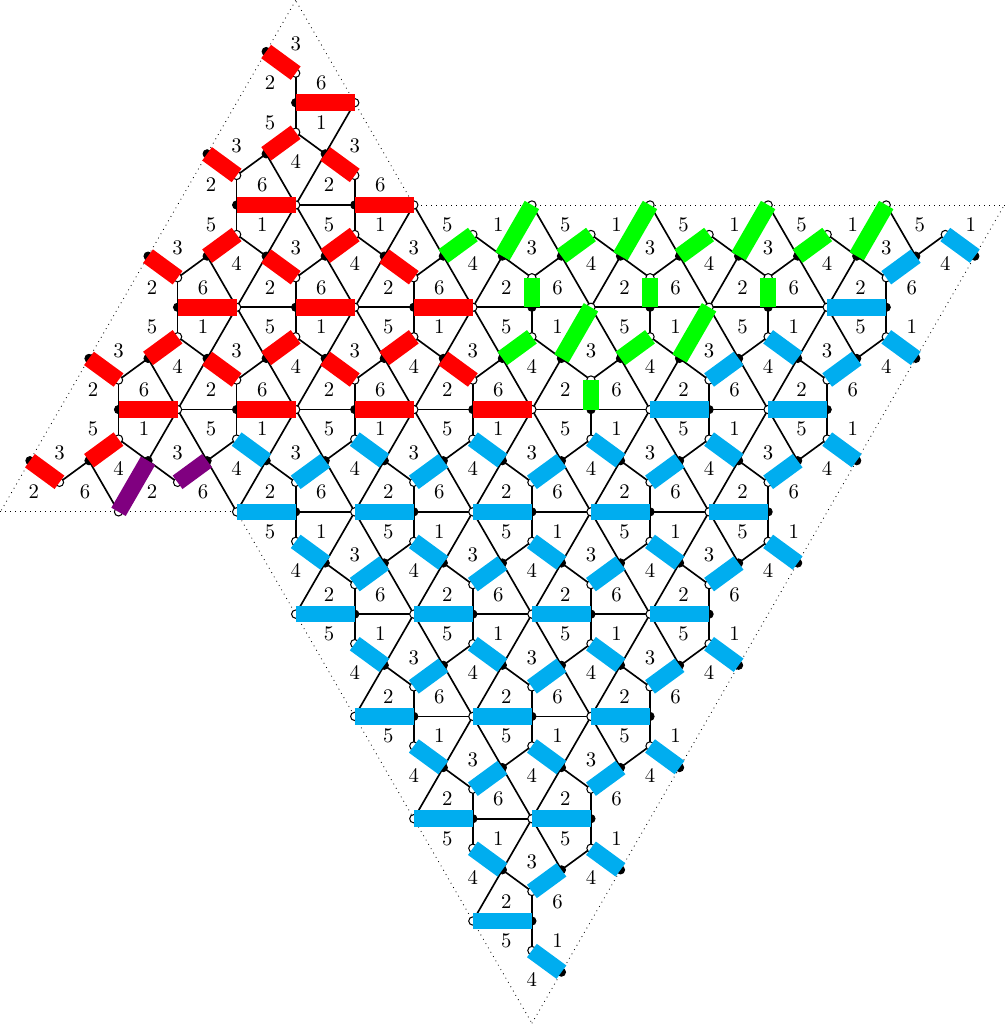}
            \caption{Minimal matching}
            \label{subfig:532Matching}
        \end{subfigure}
        \caption{Four sectors and minimal matching of Castle $\mathcal{C}_{5,3,2}$ }
        \label{fig:532Example}
    \end{figure}
    
\newpage
    \textbf{Region 2:} $k\geq 1; -k < i < k-1 < j, i+j $. This is the top middle pink region in Figure \ref{fig:allRegionPos}.

    \begin{figure}[h!]
    \centering
        \begin{subfigure}[b]{0.4\textwidth}
            \centering
            \includegraphics[width = \textwidth]{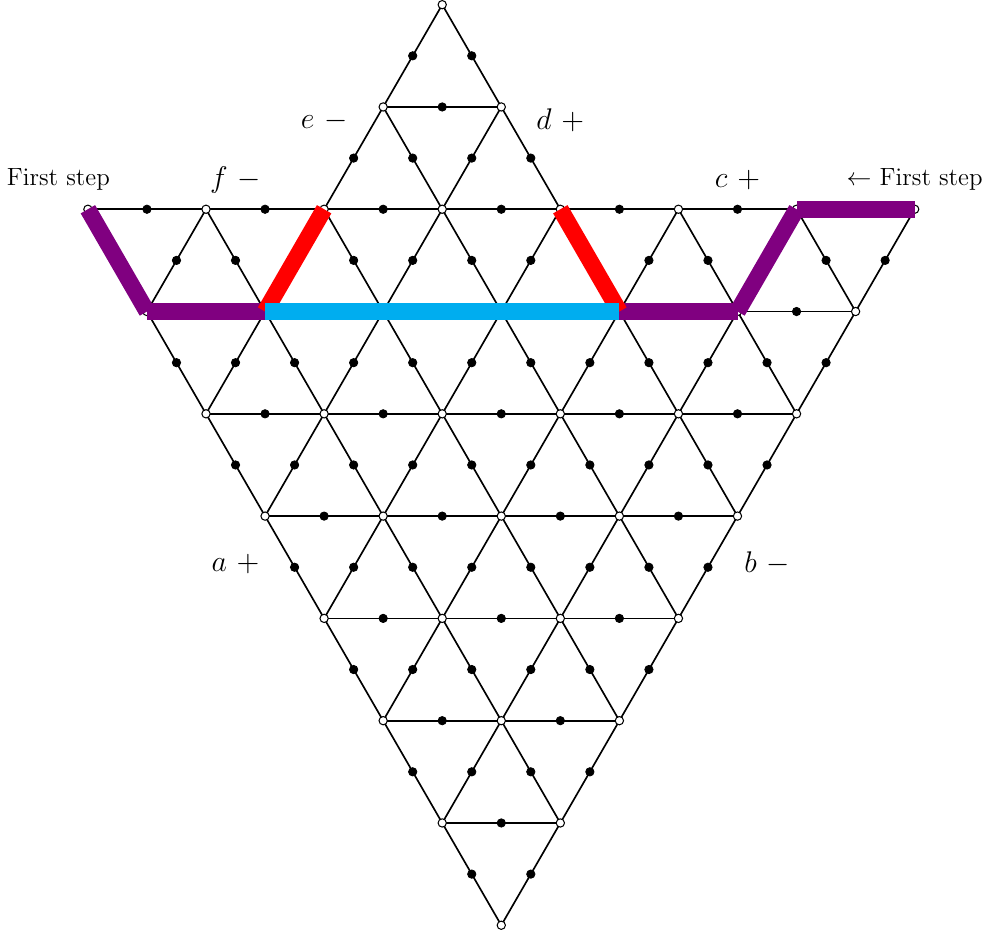}
            \caption{Four sectors}
            \label{subfig:043Divide}
        \end{subfigure}
     \quad
        \begin{subfigure}[b]{0.4\textwidth}
            \centering
            \includegraphics[width = \textwidth]{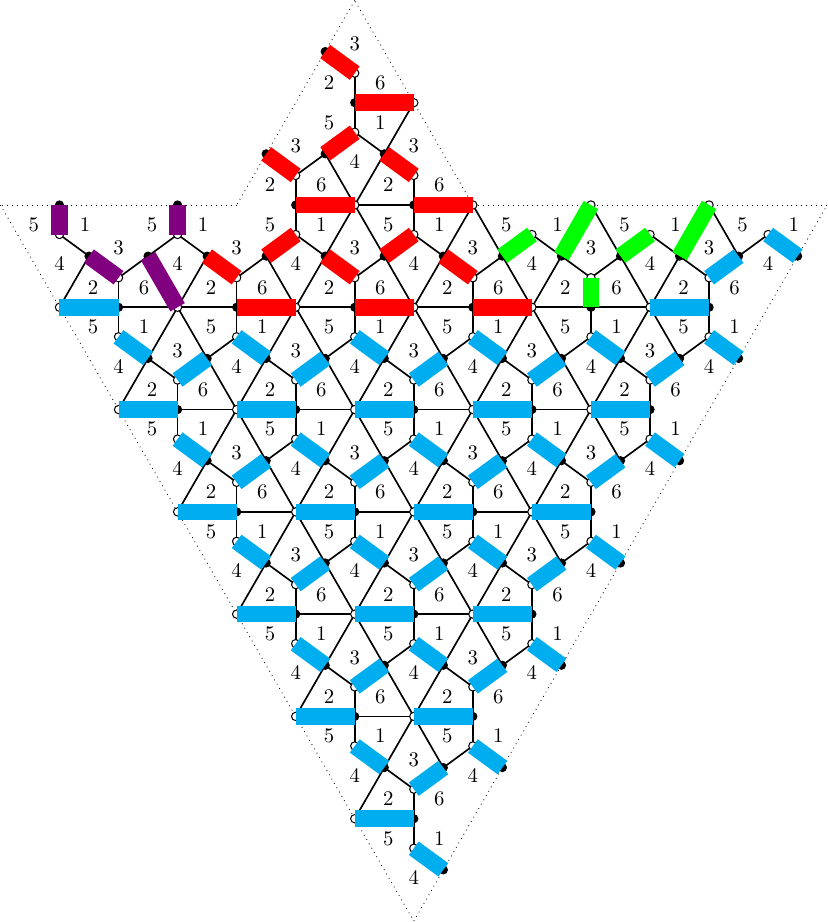}
            \caption{Minimal matching}
            \label{subfig:043Matching}
        \end{subfigure}
        \caption{Four sectors and minimal matching of Castle $\mathcal{C}_{0,4,3}$}
        \label{fig:043Example}
    \end{figure}

    \begin{figure}[h!]
    \centering
        \begin{subfigure}[b]{0.4\textwidth}
            \centering
            \includegraphics[width = \textwidth]{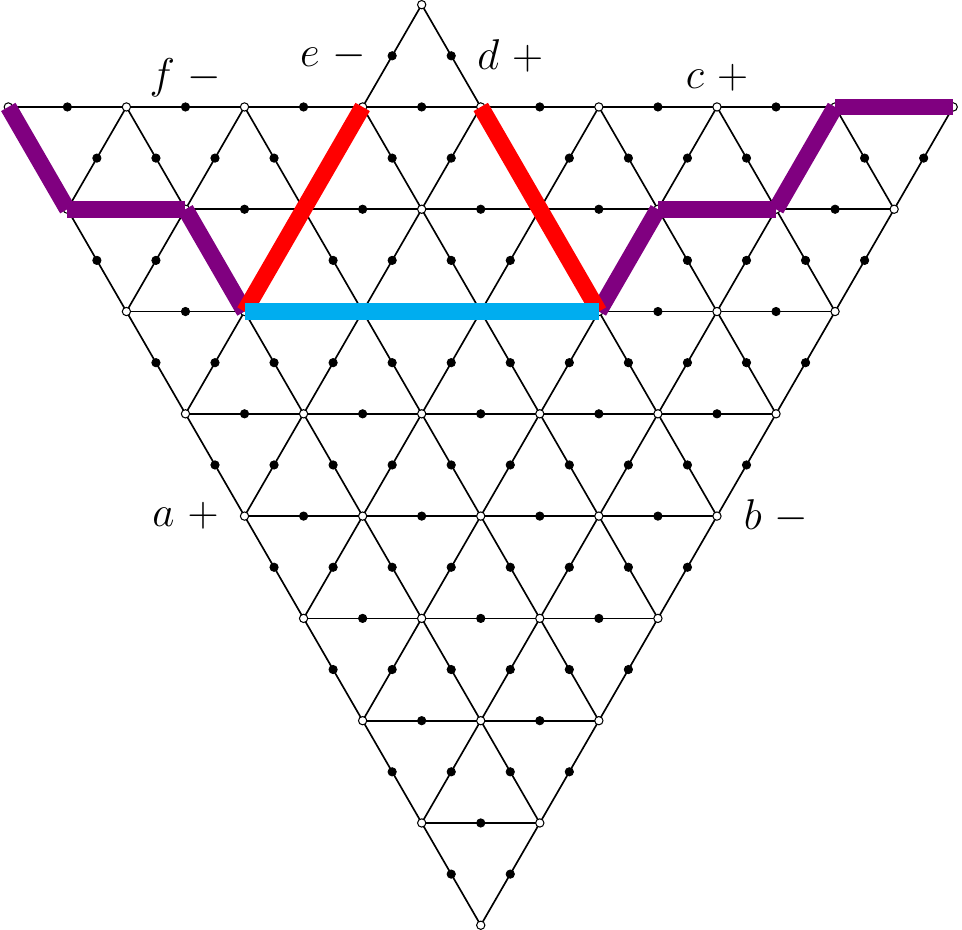}
            \caption{Four sectors}
            \label{subfig:044Divide}
        \end{subfigure}
     \quad
        \begin{subfigure}[b]{0.4\textwidth}
            \centering
            \includegraphics[width = \textwidth]{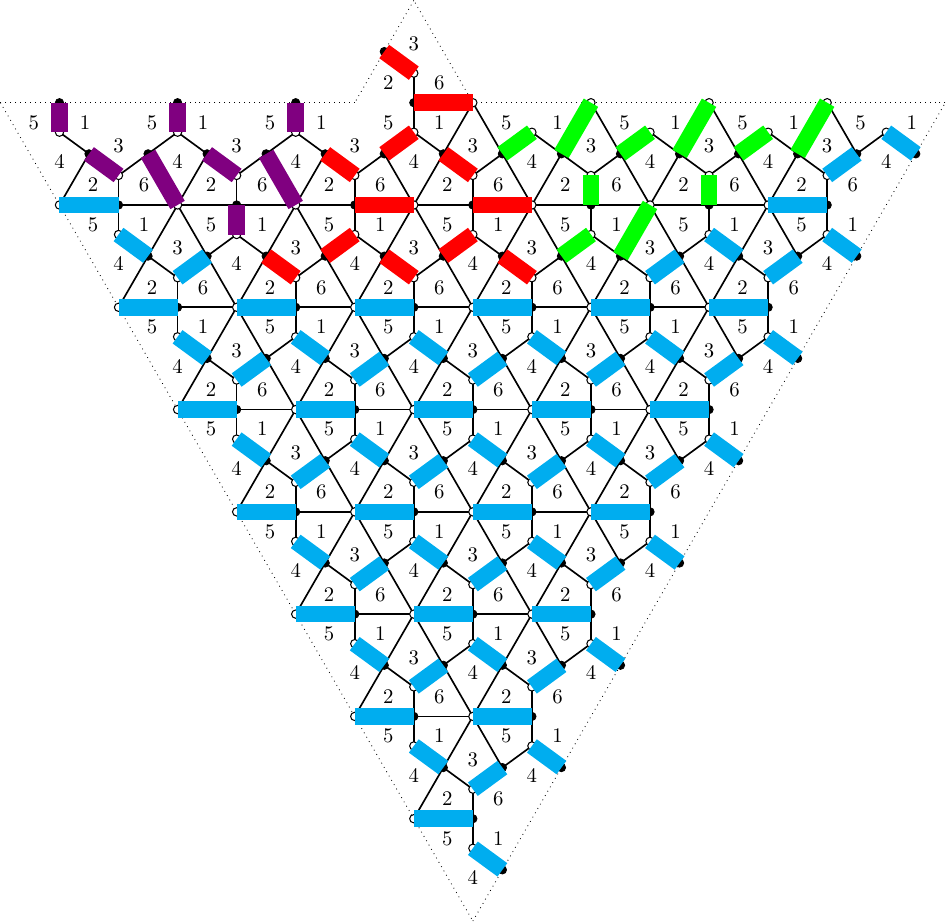}
            \caption{Minimal matching}
            \label{subfig:044Matching}
        \end{subfigure}
        \caption{Four sectors and minimal matching of Castle $\mathcal{C}_{0,4,4}$}
        \label{fig:044Example}
    \end{figure}

\newpage

    \textbf{Region 3:} $k\geq 1; i,j < k-1 < i+j$. This is the blue region in the first quadrant in Figure \ref{fig:allRegionPos}.

    \begin{figure}[h!]
    \centering
        \begin{subfigure}[b]{0.4\textwidth}
            \centering
            \includegraphics[width = \textwidth]{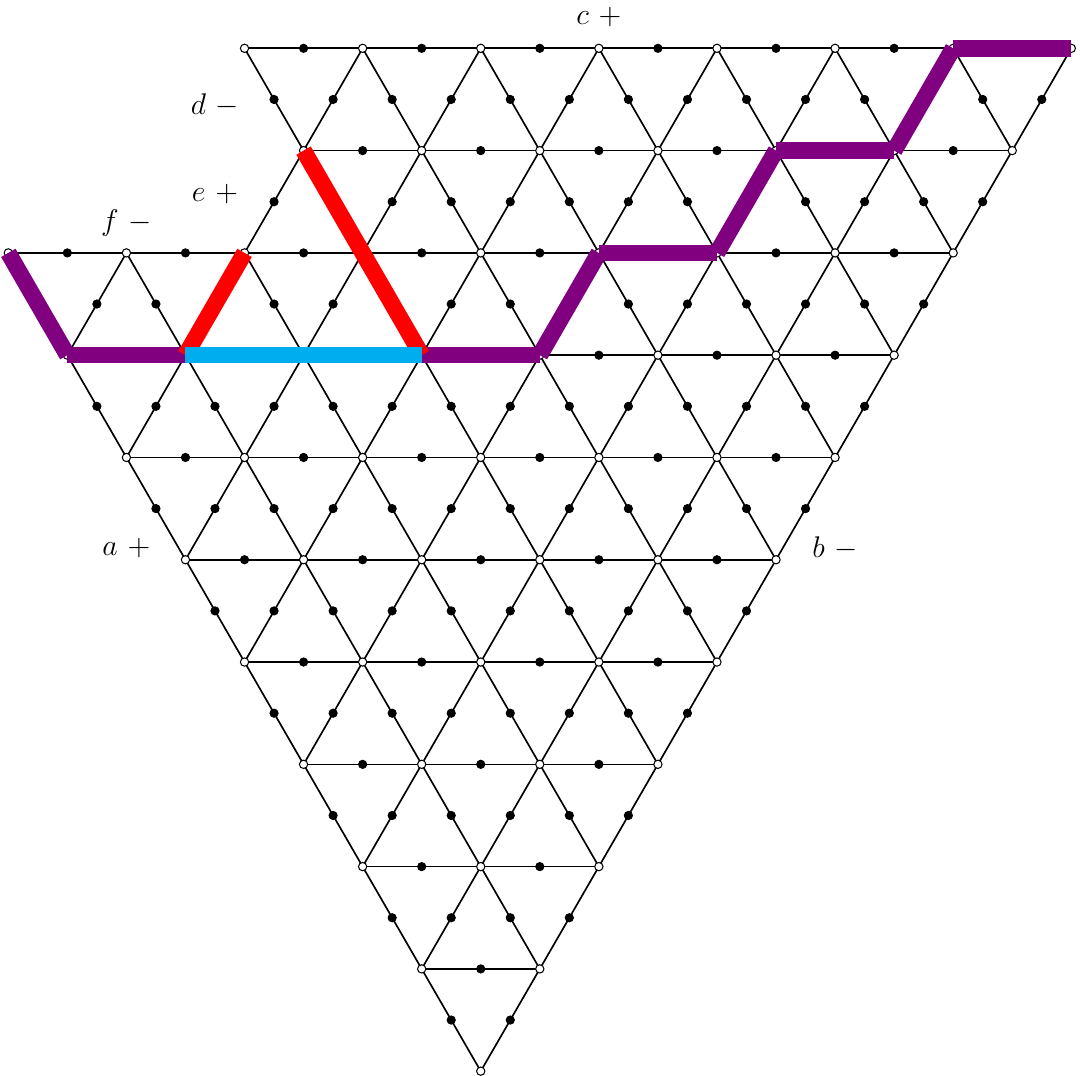}
            \caption{Four sectors}
            \label{subfig:235Divide}
        \end{subfigure}
     \quad
        \begin{subfigure}[b]{0.4\textwidth}
            \centering
            \includegraphics[width = \textwidth]{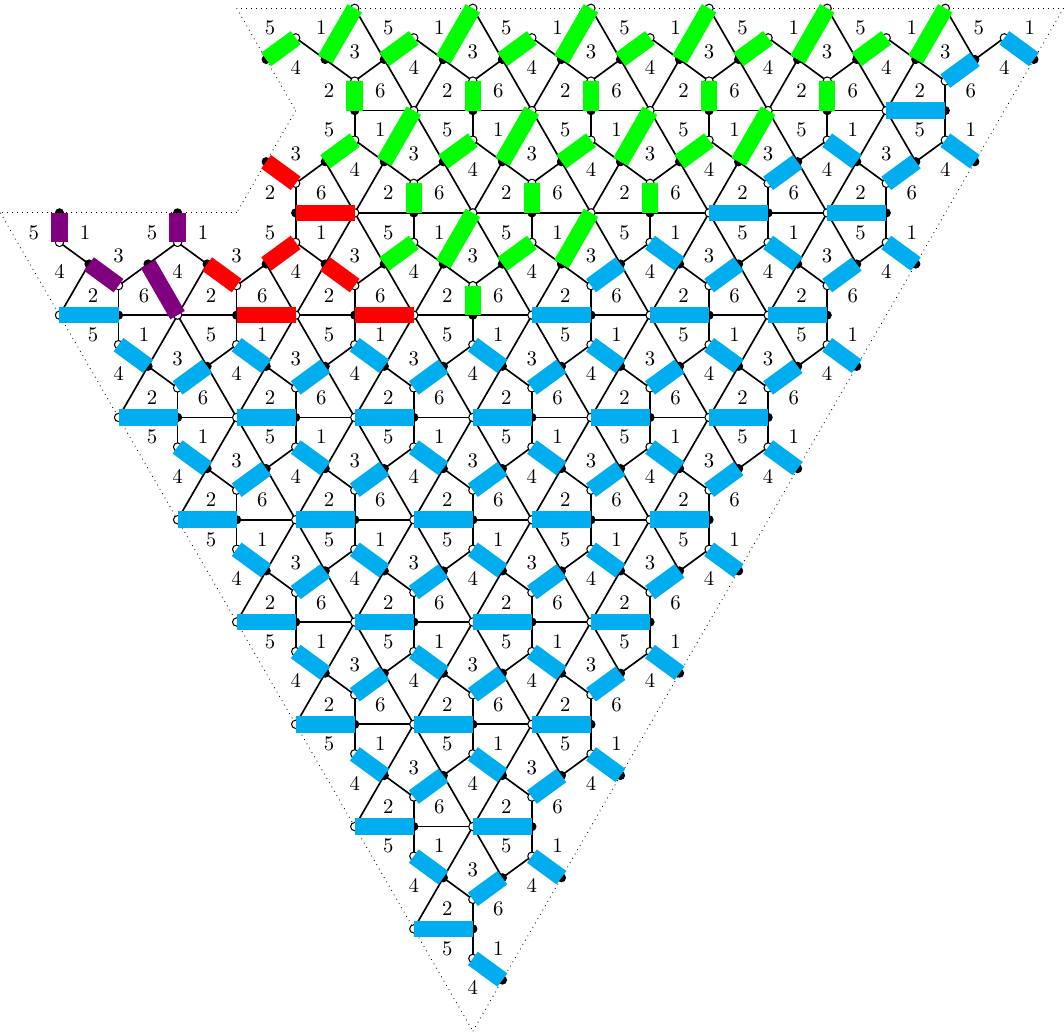}
            \caption{Minimal matching}
            \label{subfig:235Matching}
        \end{subfigure}
        \caption{Four sectors and minimal matching of Castle $\mathcal{C}_{2,3,5}$}
        \label{fig:235Example}
    \end{figure}

    \textbf{Region $1 \cap 2$:} $k\geq 1; k - 1 = i < j$. This is boundary between region 1 and 2 above.

    \begin{figure}[h!]
    \centering
        \begin{subfigure}[b]{0.4\textwidth}
            \centering
            \includegraphics[width = \textwidth]{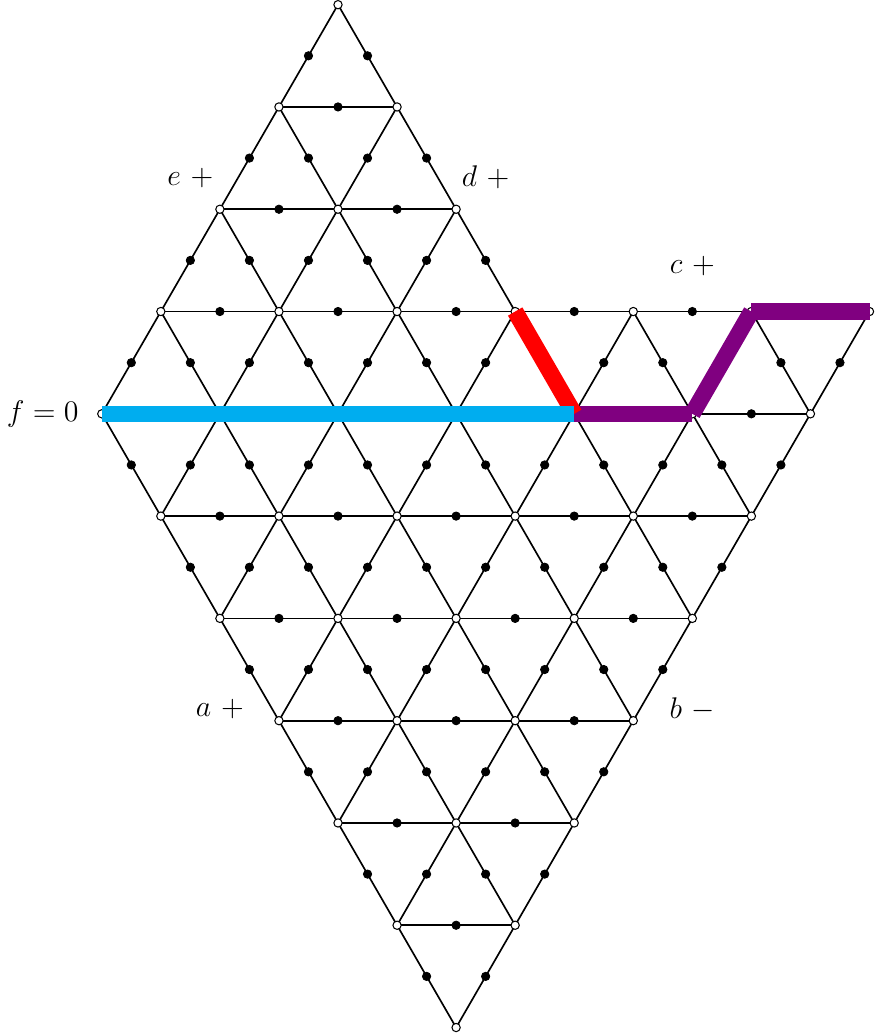}
            \caption{Four sectors}
            \label{subfig:142Divide}
        \end{subfigure}
     \quad
        \begin{subfigure}[b]{0.4\textwidth}
            \centering
            \includegraphics[width = \textwidth]{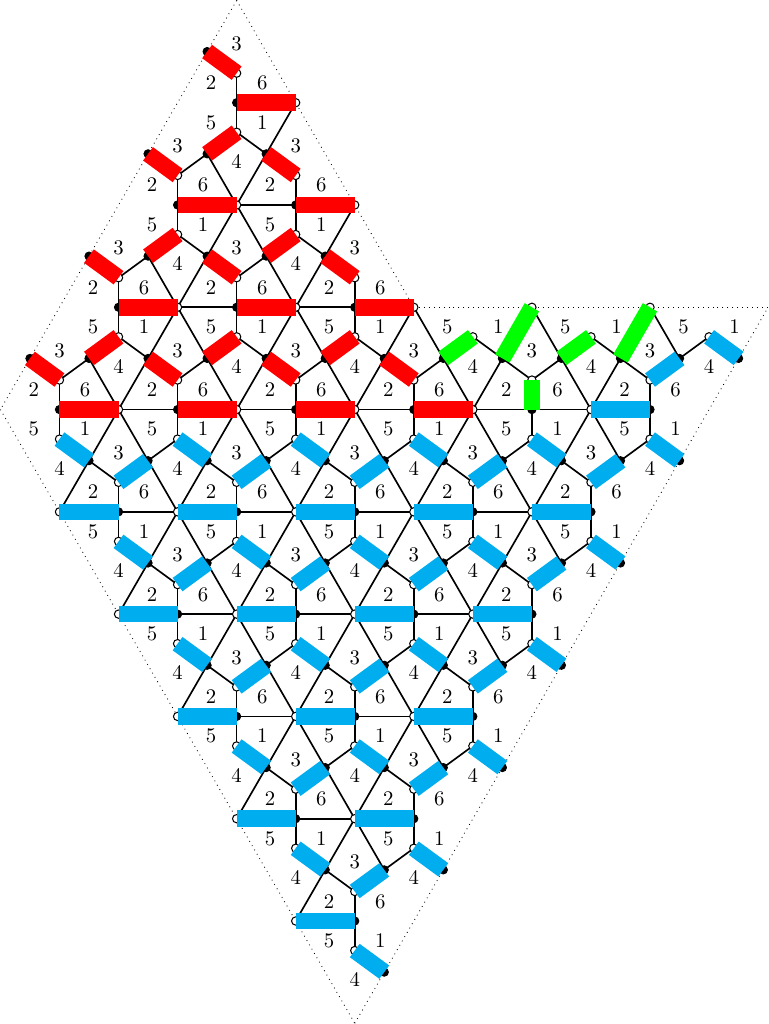}
            \caption{Minimal matching}
            \label{subfig:142Matching}
        \end{subfigure}
        \caption{Four sectors and minimal matching of Castle $\mathcal{C}_{1,4,2}$}
        \label{fig:142Example}
    \end{figure}

\newpage

    \textbf{Region $2 \cap 3$:} $k\geq 1; 0 < i < k-1 = j$. This is boundary between region 2 and 3 above.

    \begin{figure}[h!]
    \centering
        \begin{subfigure}[b]{0.4\textwidth}
            \centering
            \includegraphics[width = \textwidth]{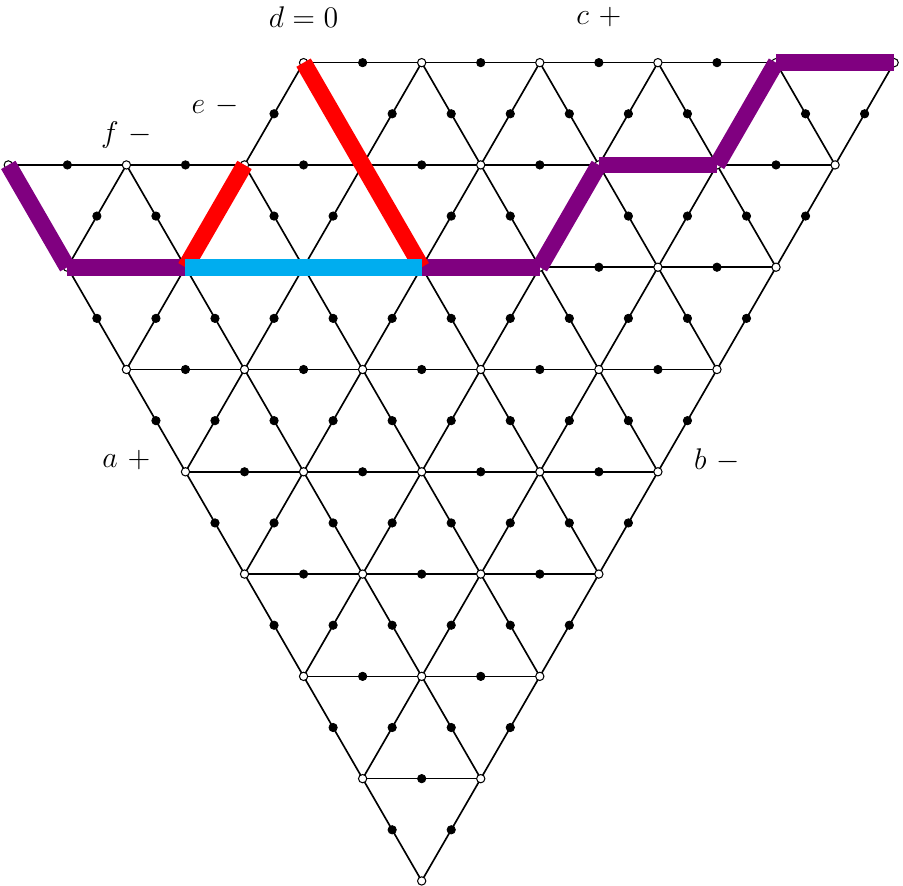}
            \caption{Four sectors}
            \label{subfig:134Divide}
        \end{subfigure}
     \quad
        \begin{subfigure}[b]{0.4\textwidth}
            \centering
            \includegraphics[width = \textwidth]{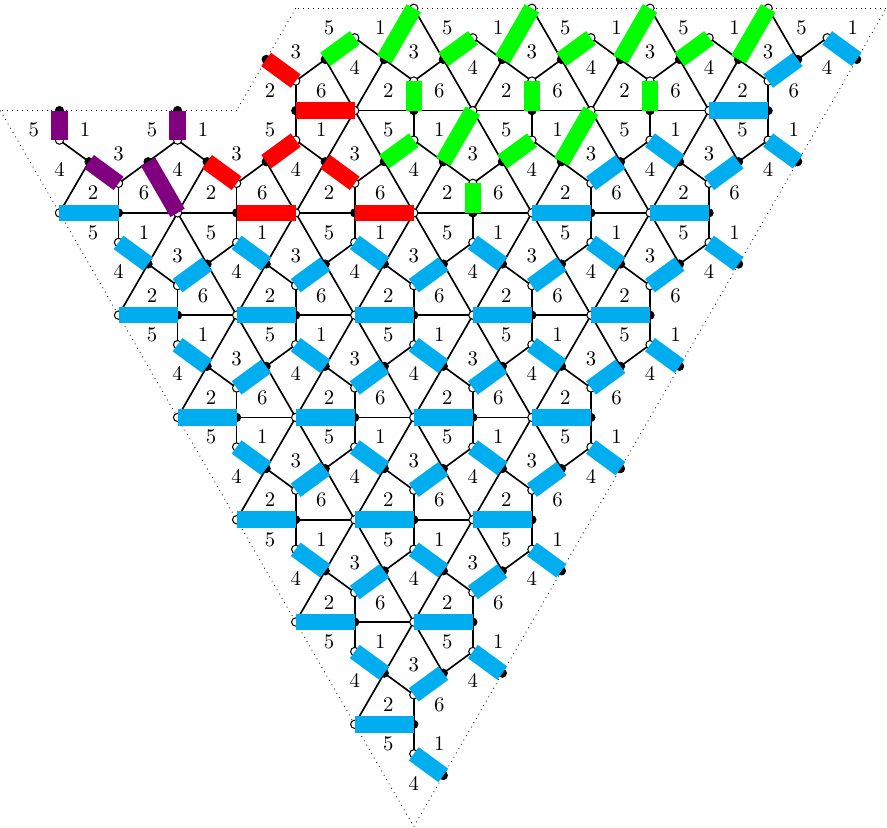}
            \caption{Minimal matching}
            \label{subfig:134Matching}
        \end{subfigure}
        \caption{Four sectors and minimal matching of Castle $\mathcal{C}_{1,3,4}$}
        \label{fig:134Example}
    \end{figure}

    \textbf{Region $1 \cap 2 \cap 3$:} $k\geq 1; i = j = k-1$. 
    This is the point where regions 1, 2, and 3, as above, intersect.

    \begin{figure}[h!]
    \centering
        \begin{subfigure}[b]{0.4\textwidth}
            \centering
            \includegraphics[width = \textwidth]{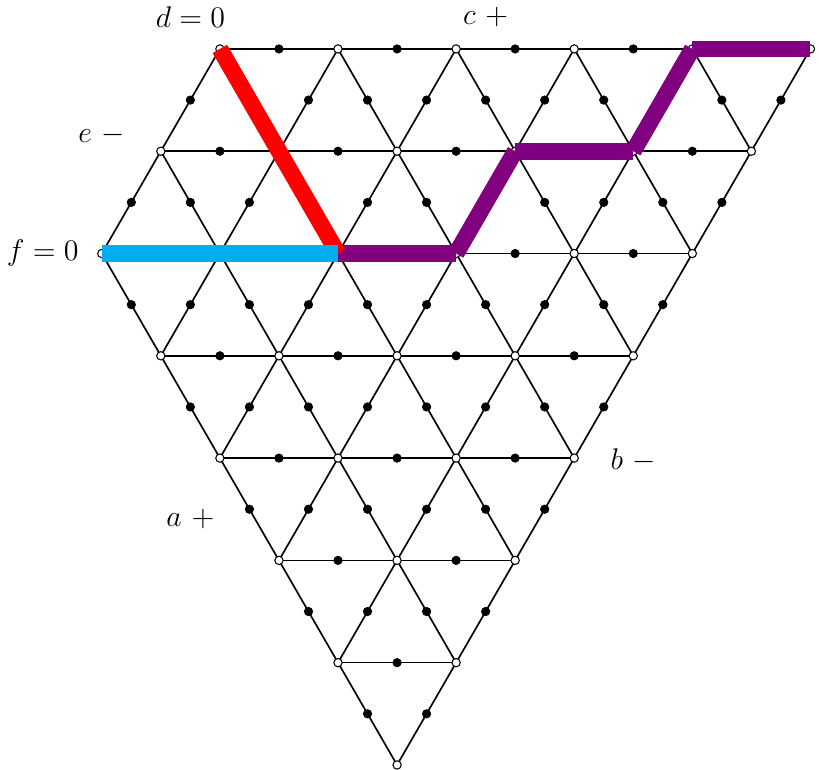}
            \caption{Four sectors}
            \label{subfig:223Divide}
        \end{subfigure}
     \quad
        \begin{subfigure}[b]{0.4\textwidth}
            \centering
            \includegraphics[width = \textwidth]{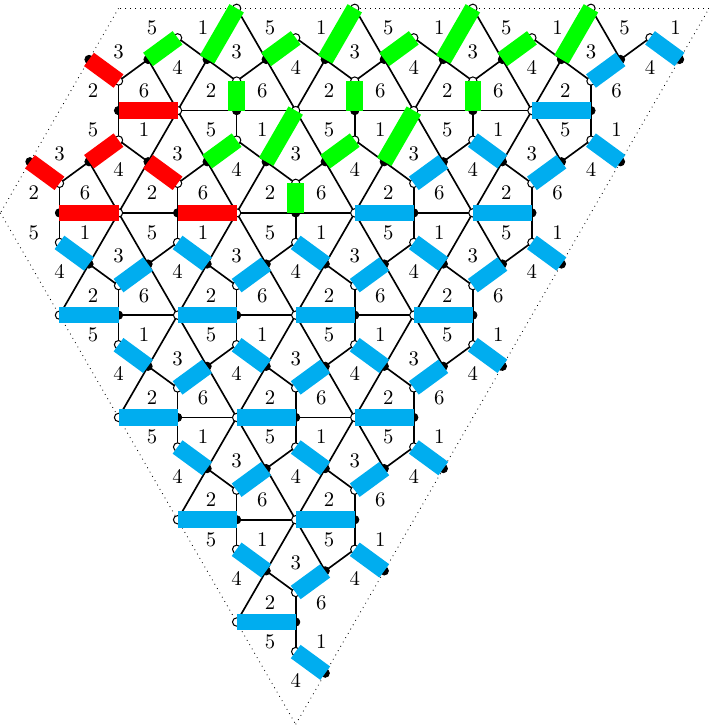}
            \caption{Minimal matching}
            \label{subfig:223Matching}
        \end{subfigure}
        \caption{Four sectors and minimal matching of Castle $\mathcal{C}_{2,2,3}$}
        \label{fig:223Example}
    \end{figure}

\newpage

\section{Aztec Dragons}\label{append:dragons}
    
    A special family of Aztec Castles is the family of Aztec Dragons, which are Aztec Castles with $i\in\{-1,0\},k\in\{0,1\}$, and $j \geq 0$. The weights of Aztec Dragons are the cluster variables of the dP3 quiver after mutations $\tau_1\tau_2\tau_3\tau_1\tau_2\tau_3\ldots$. Specifically, the four types of Aztec Dragons are

    \begin{itemize}
        \item $D_n = C_{0,n,1}$,
        \item $D'_n = C_{0,n,0}$,
        \item $D_{n+1/2} = C_{-1,n+1,0}$, and
        \item $D'_{n+1/2} = C_{-1,n+1,1}$.
    \end{itemize}
    
    Note that $D_n$ lies in Region $1\cap 2$ in Appendix \ref{append:examples}. $D'_n$ also lies in the same region but with $k\leq 0$. $D'_{n+1/2}$ lies in the boundary between Region 2 and 1' in Section \ref{subsec:zero-line}, and $D_{n+1/2}$ lies in the same region but with $k\leq 0$. Hence, $D'_n$ is a $180^{\circ}$ rotation of $D_n$, and $D'_{n+1/2}$ is a $180^{\circ}$ rotation of $D_{n+1/2}$. In this appendix, we show examples of minimal matchings of Aztec Dragons of the four types.

    \textbf{Type 1:} $D_n$. This is an example of a graph from Region $1 \cap 2$ with $f = 0$. Moreover, side $c$ always has length $i + k = 1$, so the staircase is just a single step which blends into the zero line and the relevant straight lines have length zero. In addition, since the lengths of $c$ and $f$ stay the same, the vertices on the zero line are always covered by the covering of the upper sector.

    \begin{figure}[h!]
    \centering
        \begin{subfigure}[b]{0.3\textwidth}
            \centering
            \includegraphics[width = \textwidth]{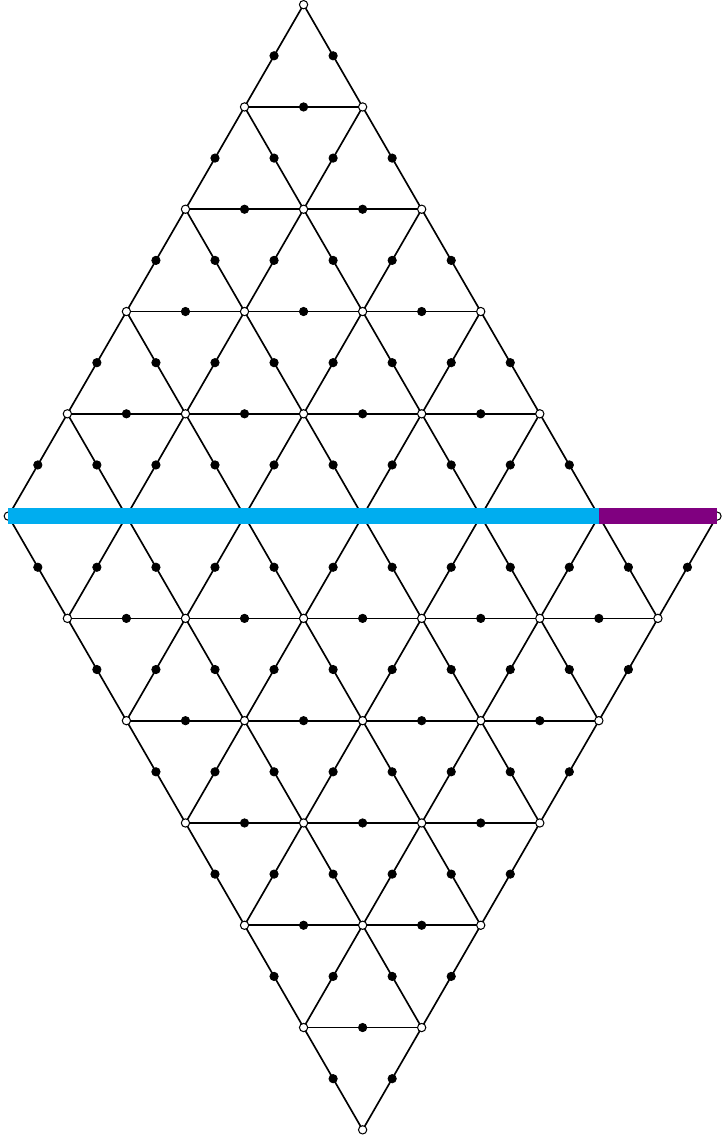}
            \caption{Four sectors}
            \label{subfig:051Divide}
        \end{subfigure}
     \quad\quad\quad
        \begin{subfigure}[b]{0.3\textwidth}
            \centering
            \includegraphics[width = \textwidth]{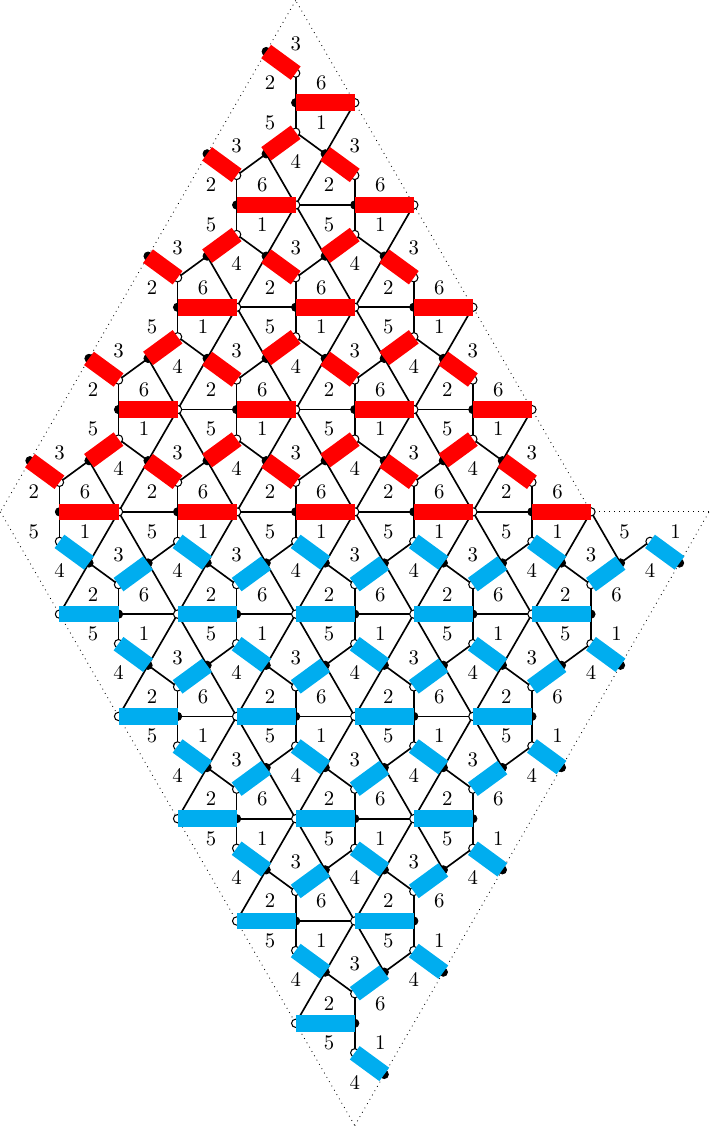}
            \caption{Minimal matching}
            \label{subfig:051Matching}
        \end{subfigure}
        \caption{Four sectors and minimal matching of Dragon $D_5 = \mathcal{C}_{0,5,1}$}
        \label{fig:051Example}
    \end{figure}

    \newpage

    \textbf{Type 2:} $D'_n$. Recall that $D'_n$ is a $180^{\circ}$ rotation of $D_n$, and one can check that Figure \ref{fig:050Example} is a $180^{\circ}$ rotation of Figure \ref{fig:051Example}.

    \begin{figure}[h!]
    \centering
        \begin{subfigure}[b]{0.3\textwidth}
            \centering
            \includegraphics[width = \textwidth]{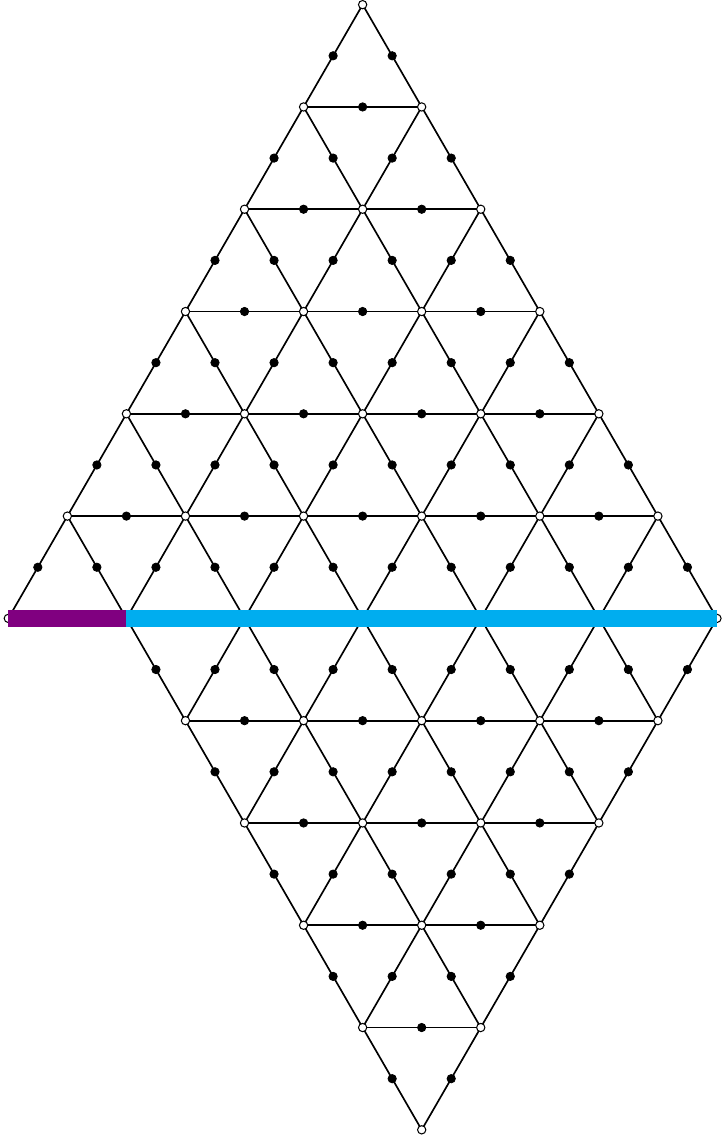}
            \caption{Four sectors}
            \label{subfig:050Divide}
        \end{subfigure}
     \quad\quad\quad
        \begin{subfigure}[b]{0.3\textwidth}
            \centering
            \includegraphics[width = \textwidth]{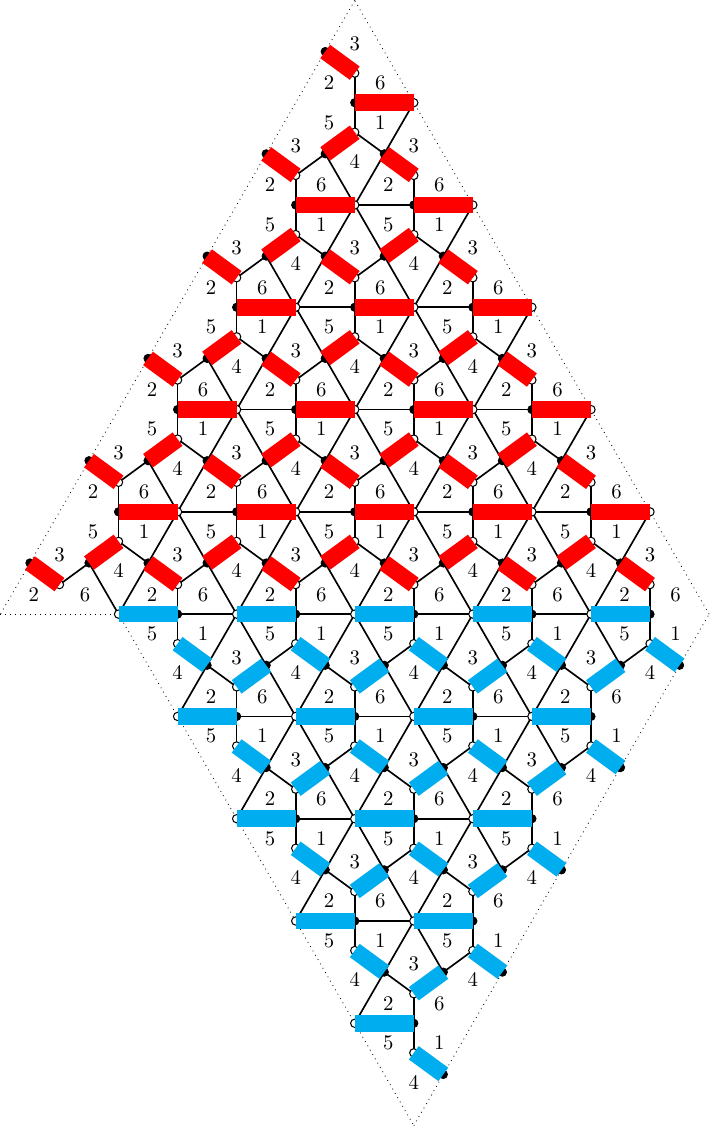}
            \caption{Minimal matching}
            \label{subfig:050Matching}
        \end{subfigure}
        \caption{Four sectors and minimal matching of Dragon $D'_5 = \mathcal{C}_{0,5,0}$}
        \label{fig:050Example}
    \end{figure}

    \textbf{Type 3:} $D_{n+1/2}$. Here, we still have $f = 0$ and $c = -1$. Hence, the staircase and straight line both have length $1$. Again, since the lengths of $c$ and $f$ stay the same, the vertices on the zero line are always covered by the covering of the upper sector.

    \begin{figure}[h!]
    \centering
        \begin{subfigure}[b]{0.3\textwidth}
            \centering
            \includegraphics[width = \textwidth]{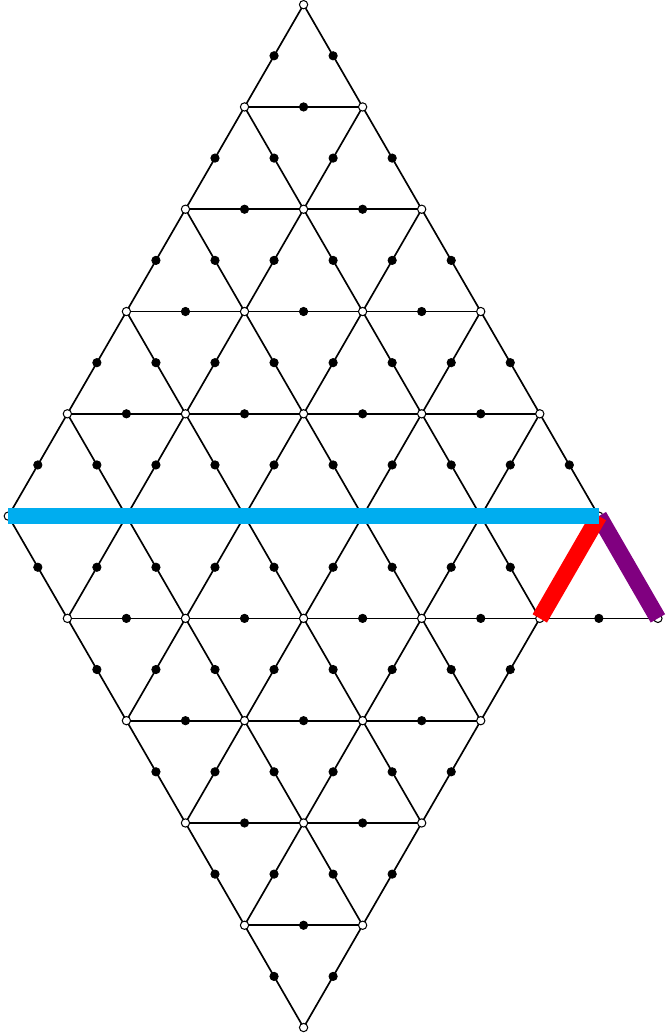}
            \caption{Four sectors}
            \label{subfig:n150Divide}
        \end{subfigure}
     \quad\quad\quad
        \begin{subfigure}[b]{0.3\textwidth}
            \centering
            \includegraphics[width = \textwidth]{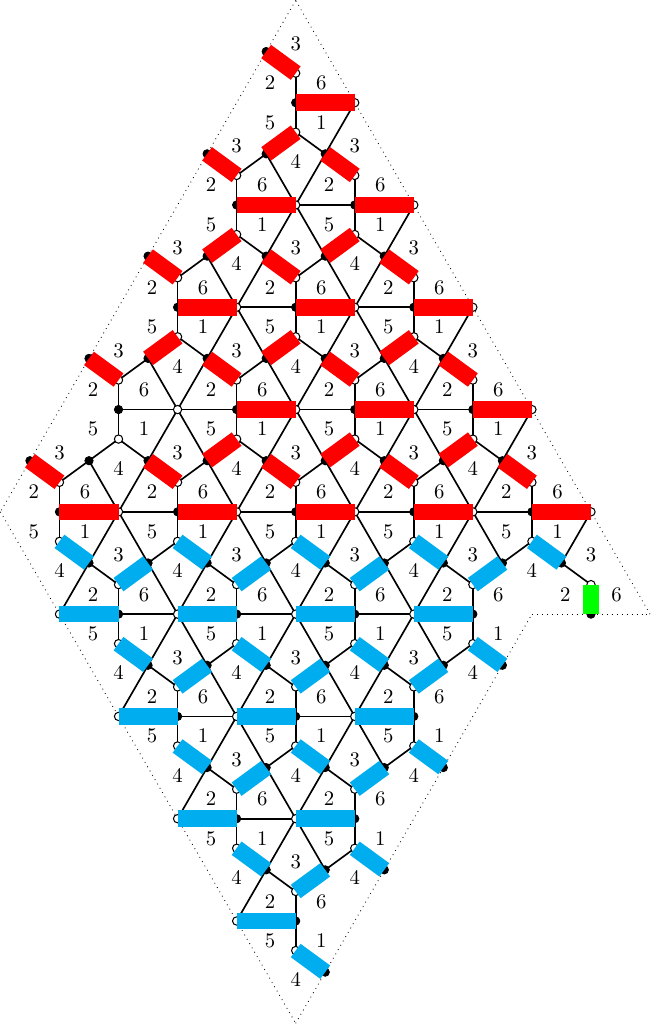}
            \caption{Minimal matching}
            \label{subfig:n150Matching}
        \end{subfigure}
        \caption{Four sectors and minimal matching of Dragon $D_{4+1/2} = \mathcal{C}_{-1,5,0}$}
        \label{fig:n150Example}
    \end{figure}

    \newpage

    \textbf{Type 4:} $D'_{n+1/2}$. Recall that $D'_{n+1/2}$ is a $180^{\circ}$ rotation of $D_{n+1/2}$, and one can check that Figure \ref{fig:n151Example} is a $180^{\circ}$ rotation of Figure \ref{fig:n150Example}.

    \begin{figure}[h!]
    \centering
        \begin{subfigure}[b]{0.3\textwidth}
            \centering
            \includegraphics[width = \textwidth]{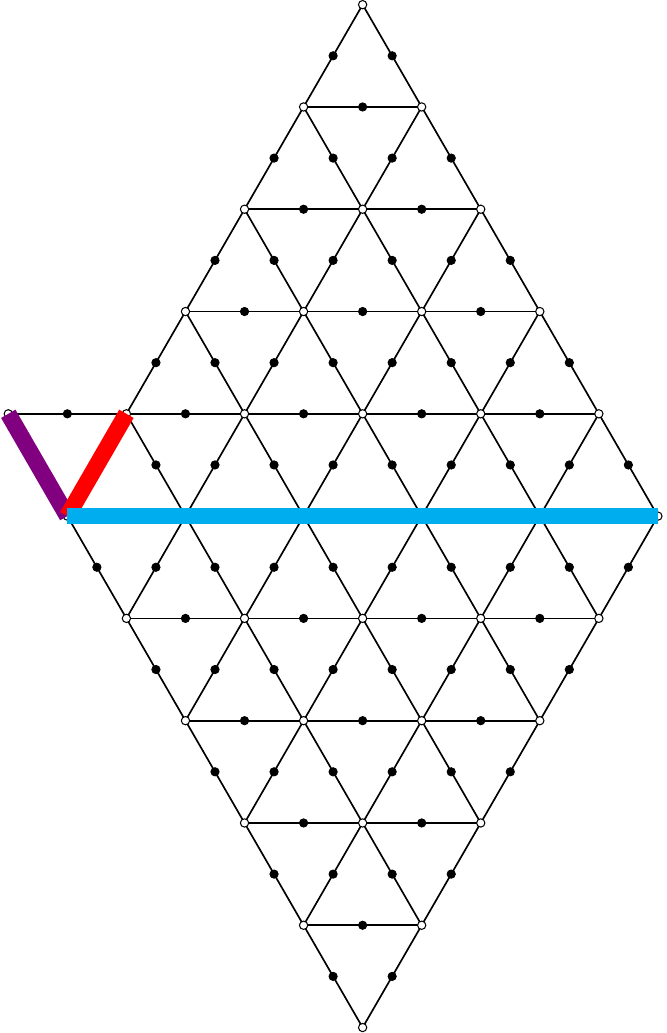}
            \caption{Four sectors}
            \label{subfig:n151Divide}
        \end{subfigure}
     \quad\quad\quad
        \begin{subfigure}[b]{0.3\textwidth}
            \centering
            \includegraphics[width = \textwidth]{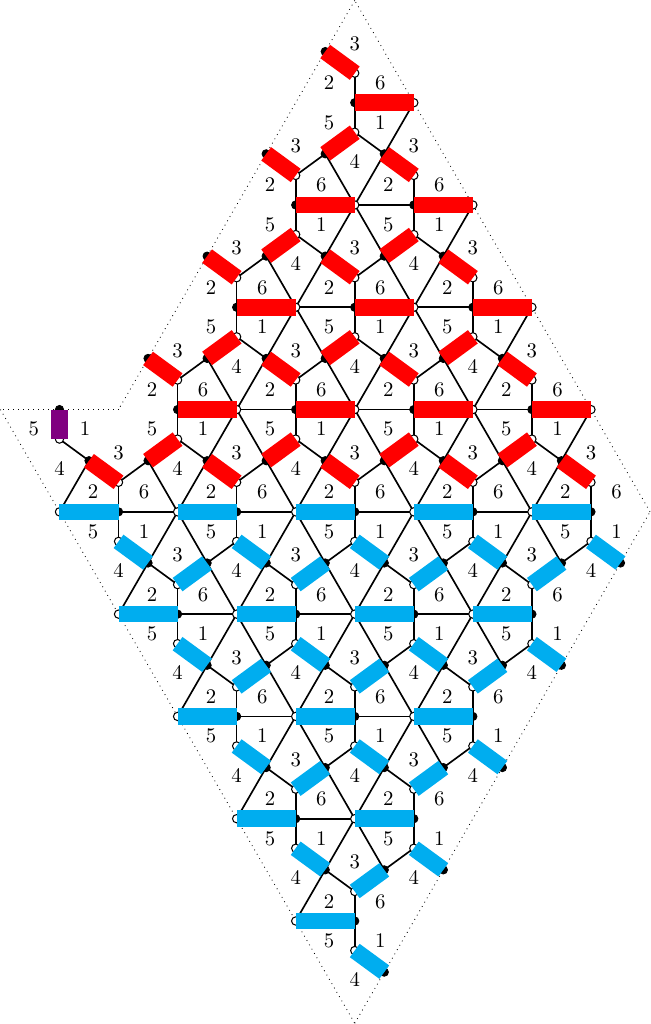}
            \caption{Minimal matching}
            \label{subfig:n151Matching}
        \end{subfigure}
        \caption{Four sectors and minimal matching of Dragon $D'_{4+1/2} = \mathcal{C}_{-1,5,1}$}
        \label{fig:n151Example}
    \end{figure}

\end{document}